\newtheorem{theo}{Theorem}[section]
\newtheorem{prop}[theo]{Proposition}
\newtheorem{coro}[theo]{Corollary}
\newtheorem{lemm}[theo]{Lemma}
\theoremstyle{definition}
\theoremstyle{remark}
\newtheorem{rema}[theo]{Remark}
\newcommand{\Op}{\operatorname{Op}}
\newcommand{\U}{\mathcal{U}}
\newcommand{\nwc}{\newcommand}
\nwc{\eps}{\epsilon}
\nwc{\ep}{\epsilon}
\nwc{\vareps}{\varepsilon}
\nwc{\Oph}{\operatorname{Op}_\hbar}
\nwc{\la}{\langle}
\nwc{\ra}{\rangle}
\nwc{\mf}{\mathbf} 
\nwc{\blds}{\boldsymbol} 
\nwc{\ml}{\mathcal} 
\nwc{\defeq}{\stackrel{\rm{def}}{=}}
\nwc{\cE}{\ml{E}}
\nwc{\cN}{\ml{N}}
\nwc{\cO}{\ml{O}}
\nwc{\cP}{\ml{P}}
\nwc{\cU}{\ml{U}}
\nwc{\cV}{\ml{V}}
\nwc{\cW}{\ml{W}}
\nwc{\tU}{\widetilde{U}}
\nwc{\IN}{\mathbb{N}}
\nwc{\IR}{\mathbb{R}}
\nwc{\IZ}{\mathbb{Z}}
\nwc{\IC}{\mathbb{C}}
\nwc{\tP}{\widetilde{P}}
\nwc{\tPi}{\widetilde{\Pi}}
\nwc{\tV}{\widetilde{V}}
\nwc{\supp}{\operatorname{supp}}
\nwc{\rest}{\restriction}
\renewcommand{\Im}{\operatorname{Im}}
\renewcommand{\Re}{\operatorname{Re}}
\begin{document}

\title{Eigenmodes of the damped wave equation and small hyperbolic subsets}

\author[Gabriel Rivi\`ere]{Gabriel Rivi\`ere\vspace{0.5cm}\\
With an appendix by St\'ephane Nonnenmacher and Gabriel Rivi\`ere}

\address{Institut de Physique Th\'eorique (CEA Saclay), Orme des Cerisiers, CEA Saclay, 91191 Gif-sur-Yvette Cedex, France}
\email{snonnenmacher@cea.fr}

\address{Laboratoire Paul Painlev\'e (U.M.R. CNRS 8524), U.F.R. de Math\'ematiques, Universit\'e Lille 1, 59655 Villeneuve d'Ascq Cedex, France}
\email{gabriel.riviere@math.univ-lille1.fr}

\thanks{This work has been partially supported by the grant ANR-09-JCJC-0099-01 of the Agence Nationale de la Recherche.}

\begin{abstract}
We study stationary solutions of the damped wave equation on a compact and smooth Riemannian manifold without boundary. In the high frequency limit, we prove that a sequence of $\beta$-damped stationary solutions cannot be completely concentrated in small neighborhoods of a small fixed hyperbolic subset made of $\beta$-damped trajectories of the geodesic flow. 

The article also includes an appendix (by S. Nonnenmacher and the author) where we establish the existence of an inverse logarithmic strip without eigenvalues below the real axis, under a pressure condition on the set of undamped trajectories. 

\end{abstract}

\maketitle

\section{Introduction}

Let $M$ be a smooth, connected, compact Riemannian manifold of dimension $d\geq 2$ and without boundary. We will be interested in the high frequency analysis of the damped wave equation,
\begin{equation}\label{e:DWE}
\left(\partial_t^2-\Delta+2a(x)\partial_t\right)v(x,t)=0,
\end{equation}
where $\Delta$ is the Laplace-Beltrami operator on $M$ and $a\in\mathcal{C}^{\infty}(M,\mathbb{R})$ is the \emph{damping function}. The case of damping corresponds actually to $a\geq 0$ but our results will be valid for any real valued function $a$. Our main concern in this article is to study asymptotic properties of solutions of the form
$$v(t,x)=e^{-\imath t\tau}u_{\tau}(x),$$
where $\tau$ belongs to $\mathbb{C}$ and $u_{\tau}(x)$ is a non trivial element in $L^2(M)$. Such a mode is a solution of~\eqref{e:DWE} if one has
\begin{equation}\label{e:specprob}(-\Delta-\tau^2-2\imath\tau a)u_{\tau}=0.\end{equation}
From the spectral analysis of~\eqref{e:DWE}, there exist countably many $(\tau_n)$ solving this \emph{nonselfadjoint} eigenvalue problem. One can also verify that their imaginary parts remain in a bounded strip parallel to the real axis and they satisfy $\lim_{n\rightarrow+\infty}\text{Re}\ \tau_n=\pm\infty$~\cite{Sj00, Hi04, Non11}. We also recall that $(\tau,u_{\tau})$ solves the eigenvalue problem~\eqref{e:specprob} if and only if $(-\overline{\tau},\overline{u}_{\tau})$ solves it~\cite{Non11}. Our main concern in the following will be to describe some asymptotic properties of sequences $(\tau_n,u_n)_n$ solving~\eqref{e:specprob} with 
$$\text{Re}\ \tau_n\rightarrow+\infty\ \text{and}\ \text{Im}\ \tau_n\rightarrow \beta,$$
where $\beta\in\mathbb{R}$. Very general results on the asymptotic
distribution of the $\tau_n$ and its links with the properties of~\eqref{e:DWE}
 have been obtained by various authors. For instance, in a very general context, Lebeau related the geometry of the undamped geodesics, the spectral asymptotics of the $\tau_n$ and the energy decay of the damped wave equation~\cite{Leb93}. Related results were also proved in several geometric contexts where the family of undamped geodesics was in some sense not too big: closed elliptic geodesic~\cite{Hi04}, closed hyperbolic geodesic~\cite{Chr07, BuChr09}, subsets satisfying a condition of negative pressure~\cite{Sch10, Sch11, Non11}. Concerning the distribution of the $\tau_n$, Sj\"ostrand gave a precise asymptotic description of the $\tau_n$ on a general compact manifold~\cite{Sj00}. We also refer the reader to~\cite{Hi02} in the case of Zoll manifolds and to~\cite{An10a} in the case of negatively curved manifolds.

\subsection{Semiclassical reduction}
We will mention more precisely some of these results related to ours but before that we would like to proceed to a semiclassical reformulation of our problem as it was performed in~\cite{Sj00}. Thanks to the different symmetries of our problem, we can restrict ourselves to the limit $\text{Re}\ \tau\rightarrow+\infty$. We will look at eigenfrequencies $\tau$ of order $\hbar^{-1}$ (where $0<\hbar\ll 1$ will be the semiclassical parameter of our problem) and we will set
$$
\tau=\frac{\sqrt{2z}}{\hbar},\ \text{where}\
z(\hbar)=\frac{1}{2}+\mathcal{O}(\hbar).
$$
In the following, we will often omit the dependence of $z(\hbar)=z$ in $\hbar$ in order to simplify the notations. Thanks to this change of asymptotic parameters, studying the high frequency modes of the problem~\eqref{e:specprob} corresponds to look at sequences $(z(\hbar)=\frac{1}{2}+\mathcal{O}(\hbar))_{0<\hbar\ll 1}$ and $(\psi_{\hbar})_{0<\hbar\ll 1}$ in $L^2(M)$ satisfying\footnote{For simplicity of exposition, we only deal with operators of this form. However, our approach could in principle be adapted to treat the case of more general families of nonselfadjoint operators like the ones considered in~\cite{Sj00},~$\S 1$.}
\begin{equation}\label{e:eigenmode}
(\mathcal{P}(\hbar,z)-z(\hbar))\psi_{\hbar}=0,\ \text{where}\ \mathcal{P}(\hbar,z):=-\frac{\hbar^2\Delta}{2}-\imath\hbar \sqrt{2z(\hbar)} a(x).
\end{equation}
Recall that, for every $t$ in $\mathbb{R}$, the quantum propagator associated to $\mathcal{P}(\hbar,z)$ is given by
\begin{equation}\label{e:propag}
\U_{\hbar}^t:=\exp\left(-\frac{\imath
    t\mathcal{P}(\hbar,z)}{\hbar}\right).
\end{equation}

It was proved by Markus-Matsaev and Sj\"ostrand that the
``horizontal'' distribution
of the eigenvalues of $\mathcal{P}(\hbar,z)$ satisfies a Weyl law in the semiclassical limit $\hbar\rightarrow 0$ -- see Theorem $5.2$ in~\cite{Sj00} for the precise statement. Translated in this semiclassical setting, our goal is to describe asymptotic properties of a sequence of normalized eigenmodes $(\psi_{\hbar})_{\hbar\rightarrow 0^+}$ satisfying~\eqref{e:eigenmode} with
$$
z(\hbar)=\frac{1}{2}+\mathcal{O}(\hbar)\qquad \text{and}\qquad \frac{\text{Im}\
  z(\hbar)}{\hbar}= \beta+o(1),
$$
as $\hbar\rightarrow 0$. A way to study these eigenmodes is to look at the following distributions on $T^*M$~\cite{Bu97, EZ}:
\begin{equation}\label{e:distrib}
\forall b\in\mathcal{C}^{\infty}_o(T^*M),\ \mu_{\psi_{\hbar}}(b):=\langle\psi_{\hbar},\Op_{\hbar}(b)\psi_{\hbar}\rangle_{L^2(M)},
\end{equation}
where $\Op_{\hbar}(b)$ is a $\hbar$-pseudodifferential operator (see section~\ref{a:pdo} for a brief reminder). Under our assumptions, one can prove that, as $\hbar$ tends to $0$, $\mu_{\psi_{\hbar}}$ converges (up to an extraction) to a probability measure $\mu$ on the unit cotangent bundle $S^*M=\{(x,\xi)\in T^*M:\|\xi\|_x=1\}$. Moreover, this probability measure satisfies the following invariance relation:
\begin{equation}\label{e:propagationscmeas}
\forall b\in\mathcal{C}^0(S^*M),\ \mu(b)=\mu\left( b\circ
  g^te^{-2\beta t-2\int_0^ta\circ g^sds}\right),
\end{equation}
where $g^t$ is the geodesic flow on $S^*M$. Such a probability measure is called a semiclassical measure of the sequence $(\psi_{\hbar})_{\hbar\rightarrow 0^+}$~\cite{Bu97, EZ} and one can verify that the support of such a measure is invariant under the geodesic flow. Following~\cite{Leb93, Sj00, AsLeb03}, one can introduce the following dynamical quantities:
$$
A_+=\lim_{T\rightarrow+\infty}\frac{1}{T}\sup_{\rho\in
  S^*M}-\int_0^Ta\circ g^s(\rho)ds,
$$
and
$$
A_-=\lim_{T\rightarrow+\infty}\frac{1}{T}\inf_{\rho\in
  S^*M}-\int_0^Ta\circ g^s(\rho)ds.
$$
Then, $\beta\in[A_-,A_+]$. As in the selfadjoint case, one can try to understand properties of these semiclassical measures -- see~\cite{AsLeb03} for some general results. For instance, if $\{\gamma\}$ is a periodic orbit on which the Birkhoff average of $-a$,
$$\lim_{T\rightarrow+\infty}-\frac{1}{T}\int_0^Ta\circ g^s(\rho)ds,\ \rho\in\{\gamma\},$$
is not equal to $\beta$, then one has $\mu(\{\gamma\})=0$. However, if the Birkhoff average along $\gamma$ is equal to $\beta$, this can be no longer true. When specified in the case of hyperbolic periodic orbits, our main result will give informations on this kind of issues.

\subsection{Results in the selfadjoint case} Before stating our result, we would like to recall related results in the selfadjoint case $a\equiv 0$ -- see also~\cite{Ze09}, section $5$ for a more detailed account on the results we will mention. In this case, it means that we look at eigenfunctions of the Laplacian on $M$ in the large eigenvalue limit.

In~\cite{CdVPa94}, Colin de Verdi\`ere and Parisse have exhibited geometric situations where one can find a sequence of eigenmodes $(\psi_{\hbar})_{\hbar>0}$ whose semiclassical measure is an invariant probability measure carried by an hyperbolic periodic orbit $\gamma$. Yet, they show that if such a concentration occurs, it must at happen at a slow rate. Precisely, they prove that if $U$ is a fixed small neighborhood of their geodesic $\gamma$, then there exists a positive constant $C$ such that
$$\int_{M\backslash U}|\psi_{\hbar}(x)|^2d\text{vol}_M(x)\geq\frac{C}{|\log\hbar|},\ \text{as}\ \hbar\rightarrow 0.$$
This result has been generalized\footnote{As pointed out at the end of
  the appendix, our proof also allows to recover (and to generalize)
  this result.} to more general Hamiltonian flows involving a hyperbolic closed geodesic by Burq-Zworski~\cite{BuZw04} and Christianson~\cite{Chr07}. In~\cite{ToZe03}, Toth and Zelditch also consider a related question and they look at the concentration of eigenmodes in shrinking tubes in $S^*M$ of size $\hbar^{\overline{\nu}}$ around a closed hyperbolic geodesic (where $0<\overline{\nu}<\frac{1}{2}$) -- see also paragraph $5.1$ of~\cite{Ze09}. Roughly speaking, they prove that, in their specific geometric situation (completely integrable flow), not all the mass of the eigenmodes can be localized on such shrinking tubes. In this article, we will consider similar questions for more general hyperbolic subsets and for stationary modes of the damped wave equation.

Finally, under a global assumption on the geodesic flow (namely it should be Anosov), Anantharaman proved that semiclassical measures associated to eigenmodes of $\Delta$ cannot be completely carried by closed orbit of the geodesic flow (which are hyperbolic in this case)~\cite{An08}. In our main statement, \emph{we will not make any global assumption on the dynamical properties of the geodesic flow} and it would be interesting to understand how Anantharaman's statement could be extended to the damped wave equation -- see~\cite{Riv11a} for results in this sense.

\subsection{Statement of the main result} We now turn back to eigenmodes of the damped wave equation. We underline that, to the knowledge of the author, even if there is an important literature concerning eigenfunctions of the Laplacian on $M$, much less seems to be known on the asymptotic description of eigenmodes for the damped wave equation.
 Our results concerning these questions will be here of two types:
\begin{itemize}
\item we extend the study of concentration in shrinking tubes of size $\hbar^{\overline{\nu}}$ to more general hyperbolic subsets satisfying a condition of negative topological pressure;
\item we consider the situation where $a$ is a general smooth and real valued function on $M$ (and not only the case $a\equiv 0$).
\end{itemize}

As it will be involved in the statement of our main result, we recall now what is the topological pressure. Let $\Lambda$ be a compact and hyperbolic subset of $S^*M$ invariant under the geodesic flow $g^t$. For any $\epsilon>0$ and $T>0$, we say that the subset $F$ in $\Lambda$ is $(\epsilon,T)$-separated if, for any $\rho$ and $\rho'$ in $F$,
$$\forall 0\leq t\leq T,\ d(g^t\rho,g^t\rho')\leq\epsilon\ \Longrightarrow\ \rho=\rho'.$$
Then, we can define the topological pressure of the subset $\Lambda$ with respect to $\frac{1}{2}\log J^u$ where $J^u$ is the unstable Jacobian -- see paragraph~\ref{ss:anosov} below. It is defined as~\cite{Pe}
$$
P_{top}\left(\Lambda, g^t,\frac{1}{2}\log
  J^u\right):=\lim_{\epsilon\rightarrow
  0}\limsup_{T\rightarrow+\infty}\frac{1}{T}\log\sup_{F}\left\{\sum_{\rho\in
    F}\exp\left(\frac{1}{2}\int_0^T\log J^u\circ
    g^s(\rho)ds\right)\right\},
$$
where the supremum is taken over all $(\epsilon,T)$-separated subsets
$F$. In this definition, we have two phenomena. On the one hand, the
Birkhoff average of $\frac{1}{2}\log J^u$ leads to exponentially small
terms when $T\to\infty$; on the other hand, depending on the
complexity of the dynamics on $\Lambda$, the cardinal of $F$ could
grow exponentially when $T\to\infty$. Thus, saying that the topological pressure is negative means that the contribution of the first quantity is more important. If $\Lambda$ is a (or a collection of) closed hyperbolic geodesics, then $P_{top}\left(\Lambda, g^t,\frac{1}{2}\log J^u\right)$ is negative.

We say that a function is $(\Lambda,\hbar,\overline{\nu})$- localized if it is a smooth cutoff function in a $\hbar^{\overline{\nu}}$-neighborhood of $\Lambda$ -- see $\S$~\ref{ss:localize} for a precise definition. We can now state our main result.

\begin{theo}\label{t:maintheo1} Suppose $\Lambda$ is a compact, invariant, hyperbolic subset satisfying 
$$P_{top}\left( \Lambda,g^t,\frac{1}{2}\log J^u\right)<0,$$ 
and such that
\begin{equation}\label{e:ptylambda}
\sup_{\rho\in\Lambda}-\int_0^Ta\circ g^s(\rho) ds\leq\beta
T+\mathcal{O}(1)\quad \text{when}\ T\rightarrow+\infty\,.
\end{equation}

Fix $0<\overline{\nu}<\frac{1}{2}$ and a $(\Lambda,\hbar,\overline{\nu})$\emph{-localized} function $\Theta_{\Lambda,\hbar,\overline{\nu}}$. 

Then, there exists a constant $c_{\Lambda,a,\overline{\nu}}<1$ such that, for any sequence $(\psi_{\hbar})_{\hbar\rightarrow 0^+}$ of eigenmodes satisfying~(\ref{e:eigenmode}) with
$$z(\hbar)=\frac{1}{2}+\mathcal{O}(\hbar)\quad \text{and}\quad \frac{\Im\ z(\hbar)}{\hbar}\geq\beta+o\left(|\log\hbar|^{-1}\right),\quad \text{as}\ \hbar\rightarrow 0^+,$$
one has
$$\limsup_{\hbar\rightarrow0}\left\langle\Op_{\hbar}\left(\Theta_{\Lambda,\hbar,\overline{\nu}}\right) \psi_{\hbar},\psi_{\hbar}\right\rangle\leq c_{\Lambda,a,\overline{\nu}}<1.$$
\end{theo}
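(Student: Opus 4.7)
\smallskip

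\noindent\emph{Proof proposal.} The strategy is to combine hyperbolic dispersion for the propagator $\U_\hbar^t$ with the pressure hypothesis on $\Lambda$ and the damping bound \eqref{e:ptylambda}, so that propagating the $\hbar^{\overline\nu}$-localized cutoff up to the Ehrenfest time forces its mass to spread, contradicting a total concentration on $\Lambda$.

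\emph{Step 1: adapted quantum partition and eigenvalue relation.} Fix a small time step $\tau>0$ and a smooth partition of unity $\sum_{k=0}^K \pi_k^2\equiv 1$ on $T^*M$, where $(\pi_k)_{k\geq 1}$ is subordinate to a generating Markov-type open cover $(V_1,\dots,V_K)$ of a thin neighborhood of $\Lambda$ and $\pi_0$ is supported away from $\Lambda$. For a word $\boldsymbol{\alpha}=(\alpha_0,\dots,\alpha_{N-1})$, define
$$
P_{\boldsymbol{\alpha}}\;:=\;\Op_\hbar(\pi_{\alpha_{N-1}})\,\U_\hbar^{\tau}\,\Op_\hbar(\pi_{\alpha_{N-2}})\cdots\U_\hbar^{\tau}\,\Op_\hbar(\pi_{\alpha_0})\,,
$$
so that $\sum_{\boldsymbol{\alpha}} P_{\boldsymbol{\alpha}}=\U_\hbar^{(N-1)\tau}$ microlocally. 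Using the eigenvalue relation $\U_\hbar^\tau\psi_\hbar=e^{-\imath\tau z/\hbar}\psi_\hbar$ together with the assumption $\Im z/\hbar\geq\beta+o(|\log\hbar|^{-1})$, one expresses $\|\Op_\hbar(\Theta_{\Lambda,\hbar,\overline\nu})\psi_\hbar\|^2$ as a sum of terms involving $\|P_{\boldsymbol{\alpha}}\psi_\hbar\|^2$, weighted by $e^{2\beta N\tau+o(1)}$.

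\emph{Step 2: Ehrenfest time and admissible words.} Choose $N=N(\hbar)$ so that $T_E:=N\tau\sim \overline\nu\,|\log\hbar|/\Lambda_{\max}$, where $\Lambda_{\max}$ controls the unstable expansion on $\Lambda$; the assumption $\overline\nu<1/2$ keeps the propagated symbols inside a usable semiclassical class and allows one to apply Egorov's theorem (or the FIO/WKB description of each factor $\U_\hbar^{\tau}\Op_\hbar(\pi_{\alpha_j})$) up to time $T_E$. Because $\Op_\hbar(\Theta_{\Lambda,\hbar,\overline\nu})$ is microlocalized near $\Lambda$, only words $\boldsymbol{\alpha}$ whose associated symbolic trajectory shadows a genuine orbit piece inside $\Lambda$ give a non-negligible contribution; all other words produce $O(\hbar^\infty)$ terms.

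\emph{Step 3: dispersion and pressure cancellation.} For each admissible word, a stationary-phase / hyperbolic dispersion analysis of the composition $P_{\boldsymbol{\alpha}}$ (in the spirit of Anantharaman--Nonnenmacher) yields
$$
\|P_{\boldsymbol{\alpha}}\|_{L^2\to L^2}^{2}\;\leq\;C\,\exp\!\left(-\!\int_0^{T_E}\!\log J^u\!\circ g^s(\rho_{\boldsymbol{\alpha}})\,ds\;-\;2\!\int_0^{T_E}\!a\circ g^s(\rho_{\boldsymbol{\alpha}})\,ds\right),
$$
with $\rho_{\boldsymbol{\alpha}}$ a representative point associated to $\boldsymbol{\alpha}$. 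Summing over admissible $\boldsymbol{\alpha}$ (which form an $(\varepsilon,T_E)$-separated set approximating $\Lambda$) and combining with the $e^{2\beta T_E}$ factor of Step~1, the damping contribution is absorbed by \eqref{e:ptylambda} (which gives $-2\!\int_0^{T_E}\!a\circ g^s(\rho_{\boldsymbol{\alpha}})\,ds\leq 2\beta T_E+O(1)$), and the remaining sum is, up to a factor $e^{\varepsilon T_E}$, controlled by $\exp\bigl(2 T_E\,P_{top}(\Lambda,g^t,\tfrac12\log J^u)\bigr)$. Since this topological pressure is strictly negative, one gains a factor $\hbar^\gamma$ with some $\gamma=\gamma(\varepsilon,\overline\nu,\Lambda)>0$, producing the strict bound
$$
\limsup_{\hbar\to 0}\,\|\Op_\hbar(\Theta_{\Lambda,\hbar,\overline\nu})\psi_\hbar\|^2\;\leq\;c_{\Lambda,a,\overline\nu}\;<\;1\,.
$$

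\emph{Main obstacle.} The delicate step is the dispersion estimate in Step~3: one must track the precise factor $e^{-\frac12\int \log J^u}e^{-\int a}$ in the operator norm of each $P_{\boldsymbol{\alpha}}$, uniformly in $\boldsymbol{\alpha}$ up to the Ehrenfest time, while the relevant symbols live at the anisotropic scale $\hbar^{\overline\nu}$. This requires a careful choice of the Markov-type partition $(V_k)$ so that the admissible words match the $(\varepsilon,T)$-separated subsets appearing in the definition of $P_{top}$, and so that the orbit-integral of $a$ along the representative $\rho_{\boldsymbol{\alpha}}$ is genuinely controlled by the hypothesis \eqref{e:ptylambda} on $\Lambda$. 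The remaining bookkeeping --- telescoping of the partition, Egorov, and propagation of the eigenvalue relation --- is standard once the dispersive bound is established.
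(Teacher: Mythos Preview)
Your proposal contains a genuine gap in Step~3, and it is precisely the difficulty the paper is organized around.

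The dispersion bound you write,
\[
\|P_{\boldsymbol{\alpha}}\|_{L^2\to L^2}^{2}\leq C\,\exp\!\Big(-\!\int_0^{T_E}\!\log J^u\!\circ g^s(\rho_{\boldsymbol{\alpha}})\,ds\;-\;2\!\int_0^{T_E}\!a\circ g^s(\rho_{\boldsymbol{\alpha}})\,ds\Big),
\]
is not correct: the Anantharaman--Nonnenmacher/Nonnenmacher--Zworski estimate carries an unavoidable prefactor $\hbar^{-d/2}$ in the operator norm, coming from the decomposition of a general $L^2$-normalized state as an $\hbar^{-d/2}$-weighted superposition of Lagrangian states (see the paper's estimate~\eqref{e:HDE0} and Remark~\ref{r:NZS}). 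With this prefactor, the gain from negative pressure after summing over admissible words at your Ehrenfest time $T_E=\kappa_0|\log\hbar|$ is of order $\hbar^{\kappa_0 n_0 P_0}$, which for the small $\kappa_0$ forced by Egorov (and by the requirement that the evolved cutoffs stay in $S^{-\infty,0}_{\overline\nu'}$) cannot compensate $\hbar^{-d/2}$. So your Step~3 does not produce a factor $\hbar^\gamma$ with $\gamma>0$; it produces a large positive power of $\hbar^{-1}$.

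This is exactly why the paper runs a \emph{two-time-scale} argument. The hyperbolic dispersion bound is used at a \emph{large} logarithmic time $kn$, with $k$ chosen so that $k\kappa_0>\tfrac{d}{n_0P_0}$, where the pressure decay does beat $\hbar^{-d/2}$ (see~\eqref{e:HDE2} and \S\ref{pa:hypdispest}). But at that time Egorov and the pseudodifferential calculus are unavailable, so one cannot read off the conclusion directly. The bridge is the subadditivity step of \S\ref{pa:submult}: splitting $(\Lambda_n^k)^c$ according to the first block not in $\Lambda_n$ and using the crucial norm bound $\big\|\sum_{\gamma\in\Lambda_n}\Pi_\gamma\big\|\leq c\,e^{nn_0\beta}$ (equation~\eqref{e:calderon}), one transfers the lower bound from the large time $kn$ down to the short time $n$, where semiclassical calculus \emph{does} apply (\S\ref{pa:semiclapprox}). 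That norm bound, in turn, is not free: it requires extending the Birkhoff-average control~\eqref{e:ptylambda} from $\Lambda$ to points whose orbit merely shadows $\Lambda$, which is the content of the Bowen--Ruelle type Lemma~\ref{l:BowenRuelle} and Lemma~\ref{r:bowenruelle}. Your proposal assumes \eqref{e:ptylambda} directly controls $-\int a\circ g^s(\rho_{\boldsymbol\alpha})\,ds$ for representatives $\rho_{\boldsymbol\alpha}$ off $\Lambda$; this needs that lemma (and is why the paper restricts to the special cylinder set $\Lambda_n$, tied to the $\hbar^{\overline\nu}$-tube, rather than all of $W^n$).

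In short: the missing ingredients are (i) the $\hbar^{-d/2}$ in the dispersion estimate, (ii) the long-time/short-time subadditivity mechanism that compensates it, and (iii) the Bowen--Ruelle lemma that makes the $e^{nn_0\beta}$ bound possible. Without (ii) the argument cannot close, regardless of how the combinatorics in Steps~1--2 are arranged.
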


We underline that we allow the imaginary parts of $z(\hbar)$ to go a
little bit below the horizontal axis $\{\Im
z=\hbar\beta\}$. Precisely, we authorize an error of order
$o(\hbar|\log\hbar|^{-1})$, that will be crucial for the results
proven in the appendix. A more comfortable statement is given by the
following corollary which can be deduced from
Theorem~\ref{t:maintheo1}:

\begin{coro}\label{t:maincro}
 Suppose $\Lambda$ is a compact, $(g^t)_t$-invariant hyperbolic satisfying 
$$P_{top}\left( \Lambda,g^t,\frac{1}{2}\log J^u\right)<0.$$ 
 Suppose also that there exists a positive constant $C$ such that
$$\forall T>0,\ \forall\rho\in\Lambda,\qquad -C+\beta T\leq-\int_0^Ta\circ g^s(\rho) ds\leq\beta T+C.$$

 Fix $0<\overline{\nu}<\frac{1}{2}$ and a $(\Lambda,\hbar,\overline{\nu})$\emph{-localized} function $\Theta_{\Lambda,\hbar,\overline{\nu}}$. 

Then, there exists a constant $c_{\Lambda,a,\overline{\nu}}<1$ such that, for any sequence $(\psi_{\hbar})_{\hbar\rightarrow 0^+}$ of eigenmodes satisfying~(\ref{e:eigenmode}) with $z(\hbar)=\frac{1}{2}+\mathcal{O}(\hbar)$ as $\hbar\rightarrow 0^+$,
one has
$$\limsup_{\hbar\rightarrow0}\left\langle\Op_{\hbar}\left(\Theta_{\Lambda,\hbar,\overline{\nu}}\right) \psi_{\hbar},\psi_{\hbar}\right\rangle\leq C_{\Lambda,a,\overline{\nu}}<1.$$
\end{coro}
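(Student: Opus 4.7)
My plan is to reduce Corollary~\ref{t:maincro} to Theorem~\ref{t:maintheo1} by exploiting the symmetry $\psi_\hbar\longmapsto\overline{\psi_\hbar}$, which maps eigenmodes of $\mathcal{P}(\hbar,z)$ to eigenmodes of the $-a$-damped operator $\mathcal{P}_{-a}(\hbar,\overline{z})$. Given a sequence $(\psi_\hbar)$ as in the statement, I would first extract a subsequence realizing $\limsup_{\hbar\to 0}\langle \Op_\hbar(\Theta_{\Lambda,\hbar,\overline{\nu}})\psi_\hbar,\psi_\hbar\rangle$. Since the eigenvalues of~\eqref{e:eigenmode} lie in a bounded horizontal strip, the scaled imaginary part $\beta_\hbar:=\Im z(\hbar)/\hbar$ is bounded; after a further extraction I may assume that along the subsequence either (i) $\beta_\hbar\geq\beta$ for every $\hbar$, or (ii) $\beta_\hbar\leq\beta$ for every $\hbar$.

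In case (i) the hypotheses of Theorem~\ref{t:maintheo1} hold verbatim: the imaginary-part lower bound is trivial, and the upper estimate in the corollary's two-sided Birkhoff assumption is exactly~\eqref{e:ptylambda}. The theorem then gives a bound by some $c_{\Lambda,a,\overline{\nu}}<1$. In case (ii) I would set $\widetilde{\psi}_\hbar:=\overline{\psi_\hbar}$; conjugating~\eqref{e:eigenmode} (using that $a$ is real-valued and $\overline{\sqrt{2z}}=\sqrt{2\overline{z}}$ on the principal branch) shows that $\widetilde{\psi}_\hbar$ is a normalized solution of the analogous eigenvalue problem for the damping $-a$ at spectral parameter $\overline{z(\hbar)}=1/2+\mathcal{O}(\hbar)$, with $\Im\overline{z(\hbar)}/\hbar=-\beta_\hbar\geq-\beta$. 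I would then apply Theorem~\ref{t:maintheo1} with $a$ replaced by $-a$, $\beta$ by $-\beta$, and $\Lambda$ by its image $\iota(\Lambda)$ under the time-reversal involution $\iota:(x,\xi)\mapsto(x,-\xi)$. Since $\iota$ conjugates $g^t$ with $g^{-t}$ and swaps the stable and unstable subbundles, $\iota(\Lambda)$ is again compact, invariant and hyperbolic with the same topological pressure $P_{top}(\iota(\Lambda),g^t,\tfrac12\log J^u)<0$, and the required inequality $\sup_{\rho\in\iota(\Lambda)}-\int_0^T(-a)\circ g^s(\rho)\,ds\leq -\beta T+\mathcal{O}(1)$ rewrites, via $\iota\circ g^t=g^{-t}\circ\iota$, as precisely the lower bound on $-\int_0^T a\circ g^s$ assumed in the corollary.

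Theorem~\ref{t:maintheo1} applied to $\widetilde{\psi}_\hbar$ with the $(\iota(\Lambda),\hbar,\overline{\nu})$-localized cutoff $\Theta^\ast_\hbar(x,\xi):=\Theta_{\Lambda,\hbar,\overline{\nu}}(x,-\xi)$ would then yield $\limsup\langle\Op_\hbar(\Theta^\ast_\hbar)\widetilde{\psi}_\hbar,\widetilde{\psi}_\hbar\rangle\leq c'<1$. Finally, the standard identity $\langle\Op_\hbar(b\circ\iota)\overline{\psi},\overline{\psi}\rangle=\overline{\langle\Op_\hbar(b)\psi,\psi\rangle}$, valid for the Weyl quantization recalled in~\S\ref{a:pdo}, transfers this estimate back to a bound on $\langle\Op_\hbar(\Theta_{\Lambda,\hbar,\overline{\nu}})\psi_\hbar,\psi_\hbar\rangle$, and setting $C_{\Lambda,a,\overline{\nu}}:=\max(c_{\Lambda,a,\overline{\nu}},c')<1$ closes the argument. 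I do not anticipate any serious technical obstacle here; the whole proof is essentially bookkeeping around Theorem~\ref{t:maintheo1}, the only substantive observation being that the two-sided form of the Birkhoff hypothesis in the corollary is tailored precisely to accommodate both regimes $\beta_\hbar\geq\beta$ and $\beta_\hbar\leq\beta$ through a single application of the theorem, once to $\psi_\hbar$ and once to its complex conjugate.
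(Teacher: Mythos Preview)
Your proposal is correct and follows essentially the same route as the paper: reduce to Theorem~\ref{t:maintheo1} via the conjugation symmetry $\psi_\hbar\mapsto\overline{\psi_\hbar}$, applying the theorem either to $(\psi_\hbar,a,\beta,\Lambda)$ or to $(\overline{\psi_\hbar},-a,-\beta,\iota(\Lambda))$ according to the sign of $\Im z(\hbar)/\hbar-\beta$. The paper argues by contradiction and uses the semiclassical measure relation~\eqref{e:propagationscmeas} to first force $\Im z(\hbar)/\hbar\to\beta$ before splitting into the two cases, whereas your direct pigeonhole extraction avoids this detour and yields the explicit constant $C_{\Lambda,a,\overline{\nu}}=\max(c_{\Lambda,a,\overline{\nu}},c')$; this is a mild simplification but not a genuinely different strategy.
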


\begin{proof}
Let us briefly explain how Corollary~\ref{t:maincro} can be obtained from Theorem~\ref{t:maintheo1}. One can proceed by contradiction and suppose that there exists a sequence $(\hbar_l\searrow 0)_{l\in\mathbb{N}}$ and a sequence $(\psi_{\hbar_l})_l$ of normalized eigenmodes satisfying~(\ref{e:eigenmode}) with $z(\hbar_l)=\frac{1}{2}+\mathcal{O}(\hbar_l)$ and
$$\lim_{l\rightarrow+\infty}\left\langle\Op_{\hbar_l}\left(\Theta_{\Lambda,\hbar_l,\overline{\nu}}\right) \psi_{\hbar_l},\psi_{\hbar_l}\right\rangle=1.$$
This implies that, for any semiclassical measure $\mu$ associated to
this sequence, one must have $\mu(\Lambda)=1$. In particular, thanks
to relation~\eqref{e:propagationscmeas}, this implies that $\frac{\Im\
  z(\hbar_l)}{\hbar_l}$ tends to $\beta$ as $l$ tends to
infinity. Thanks to Theorem~\ref{t:maintheo1}, one also obtains that
$\frac{\Im\ z(\hbar_{l'})}{\hbar_{l'}}\leq\beta$ for an infinite
subsequence of integers $l'$. On the other hand, our assumption also implies that
\begin{equation}\label{e:contradcoro}\lim_{l'\rightarrow+\infty}\left\langle\Op_{\hbar_{l'}}\left(\tilde{\Theta}_{\Lambda,\hbar_{l'},\overline{\nu}}\right) \overline{\psi_{\hbar_{l'}}},\overline{\psi_{\hbar_{l'}}}\right\rangle=1,
\end{equation}
where
$\tilde{\Theta}_{\Lambda,\hbar_{l'},\overline{\nu}}(x,\xi)=\Theta_{\Lambda,\hbar_{l'},\overline{\nu}}(x,-\xi).$
The function $\tilde{\Theta}_{\Lambda,\hbar_{l'},\overline{\nu}}$
satisfies the assumption of Theorem~\ref{t:maintheo1} with the set
$\Lambda$ replaced by
$\Lambda':=\{(x,\xi):(x,-\xi)\in\Lambda\}$. Moreover, the sequence $(
\overline{\psi_{\hbar_{l'}}})_{l'}$ solves~(\ref{e:eigenmode}) if we
replace $a$ by $-a$ and $z(\hbar_{l'})$ by
$\overline{z(\hbar_{l'})}$. In particular, since $\frac{\Im\overline{\
    z(\hbar_{l'})}}{\hbar_{l'}}\geq-\beta$ and since $\Lambda'$ satisfies
the assumption of Theorem~\ref{t:maintheo1} w.r.t. the pair $(-a,-\beta)$, we can
apply the Theorem to this new sequence: the conclusion of the Theorem contradicts the limit~\eqref{e:contradcoro}.

\end{proof}

In the selfadjoint case $a\equiv 0$, this corollary slightly improves
Toth-Zelditch's result as we only impose the hyperbolic subsets to
satisfy a condition of negative topological pressure. A default of our
approach is yet that the upper bound $c_{\Lambda,a,\overline{\nu}}$ is
not very explicit compared to the constant appearing in~\cite{Ze09} --
section $5$. Our interest in proving this result was also to show that
this property remains true in the nonselfadjoint case where $a$ is non
constant. As was already mentioned, nothing forbids a priori that
eigenmodes with damping parameter $\beta$ concentrate on a
$\beta$-damped closed geodesic\footnote{In the selfadjoint case
  ($a\equiv 0,\ \beta=0$), Colin de Verdi\`ere $\&$ Parisse's example satisfies such a property.}: corollary~\ref{t:maincro} prevents fast concentration on such orbits if they are hyperbolic.

If the geodesic flow is ergodic for the Liouville measure on $S^*M$ (manifolds of negative curvature are the main example), Sj\"ostrand showed that most of the imaginary parts converge to the spatial average of $-a$~\cite{Sj00}. Thus, in this case, our result says that if there is a hyperbolic closed geodesic with such a Birkhoff average, then eigenmodes cannot concentrate on it too fast. As was already pointed out, it would be interesting to understand what can be said under the additional assumption that the geodesic flow is Anosov on $S^*M$ (e.g. if $M$ is of negative curvature). For instance, can one prove in the Anosov case that semiclassical measures cannot be completely carried by a $\beta$-damped closed orbit?

Finally, we would like to say a few words about the proof. Our argument relies crucially on hyperbolic dispersive estimates as they were obtained by Anantharaman and Nonnenmacher in the Anosov case~\cite{An08, AN07} and by Nonnenmacher and Zworski in the context of chaotic scattering~\cite{NZ09}. More precisely, we will use a generalization of these properties in a nonselfadjoint setting similar to the results obtained by Schenck in~\cite{Sch10}.

These hyperbolic estimates give an upper bound for the growth of ``quantum cylinders'' associated to $\psi_{\hbar}$ and localized near the hyperbolic set $\Lambda$. These cylinders are a kind of analogues in a quantum setting of the Bowen balls used in the theory of dynamical systems~\cite{KaHa, Pe}. Under our dynamical assumption on $\Lambda$, one can show that the mass of ``quantum cylinders'' near the set $\Lambda$ is exponentially small for cylinders of length $\mathcal{K}|\log\hbar|$ (with $\mathcal{K}>0$ very large but independent of $\hbar$) -- paragraph~\ref{pa:hypdispest}. Then, the main difficulty is that it is hard to connect these estimates for long cylinders to estimates which are valid for shorter cylinders to which we could apply the semiclassical approximation, e.g. of length less than the Ehrenfest time $\kappa_0|\log\hbar|$~\cite{BoRo02} (with $\kappa_0>0$ small independent of $\hbar$). It turns out that if we restrict ourselves to cylinders that remain in a $\hbar^{\overline{\nu}}$-
neighborhood of $\Lambda$, the mass on the quantum cylinders (far from this neighborhood) is positive and it satisfies a ``subadditive structure'' -- paragraph~\ref{pa:submult}. A similar property was already observed and used by Anantharaman in a selfadjoint context~\cite{An08}. In our case, it implies that if the mass on the cylinders of length $\mathcal{K}|\log\hbar|$ far from the $\hbar^{\overline{\nu}}$-neighborhood is positive, then this property remains true for cylinders of shorter length $\kappa_0|\log\hbar|$. This observation is crucial in our proof and it allows to get the conclusion using standard semiclassical rules-- paragraph~\ref{pa:semiclapprox}

\subsection*{Organization of the article} In section~\ref{s:dynamical}, we introduce the dynamical setting of the article. We also build an open cover of $S^*M$ that will be used to define quantum cylinders in the subsequent section. Then, in section~\ref{s:proof}, we give the proof of Theorem~\ref{t:maintheo1} and postpone the proof of several semiclassical results to section~\ref{s:largesums}. In section~\ref{a:pdo}, we give a short toolbox on pseudodifferential calculus on a manifold. 

Finally, in an appendix in collaboration with St\'ephane Nonnenmacher, we explain how these methods can be used to derive inverse logarithmic spectral gaps for the damped wave equation -- see~\cite{Chr07, BuChr09, CSVW12} for related results.

\section{Dynamical setting}\label{s:dynamical}

The Hamiltonian function associated to the geodesic flow on $S^*M$ will be denoted $p_0(x,\xi)=\frac{\|\xi\|_x^2}{2}$ in the following of this article. Under proper assumptions (see remark~\ref{r:generalization}), we underline that our proof should also work for more general Hamiltonian flows as in~\cite{NZ09, Sj00}; yet, for simplicity of exposition, we restrict ourselves to the case of geodesic flows.   

\subsection{Hyperbolic sets}\label{ss:anosov}

From this point, we make the assumption that the set $\Lambda$ is a compact, invariant and hyperbolic subset of $S^*M$. The hyperbolicity hypothesis means that one has the following decomposition~\cite{KaHa}

$$\forall\rho\in\Lambda,\ T_{\rho}S^*M=\mathbb{R}X_{p_0}(\rho)\oplus E^u(\rho)\oplus E^s(\rho),$$
where $\mathbb{R}X_{p_0}(\rho)$ is the direction of the Hamiltonian vector field, $E^u(\rho)$ is the unstable space and $E^s(\rho)$ is the stable space. In particular, there exist a constant $C>0$ and $0<\lambda<1$ such that for every $t\geq 0$, one has
$$\forall v^u\in E^u(\rho),\ \|d_{\rho}g^{-t}v^u\|\leq C\lambda^{t}\|v^u\|\ \text{and}\ \forall v^s\in E^s(\rho),\ \|d_{\rho}g^{t}v^s\|\leq C\lambda^{t}\|v^s\|.$$
Due to the specific structure of our Hamiltonian, the above properties remain true for any energy layer\footnote{For more general Hamiltonian, it would remain true in a small vicinity of the energy layer due to the stability of the hyperbolic structure~\cite{KaHa}.} associated to $E>0$
$$\mathcal{E}_E:=p_0^{-1}\left(\{E\}\right)=\left\{(x,\xi)\in T^*M:p_0(x,\xi)=E\right\}.$$
Define now the unstable Jacobian at point $\rho\in S^*M$ and time $t\geq 0$
$$J^u_t(\rho):=\left|\det\left(d_{g^t\rho}g^{-t}_{|E^u(g^t\rho)}\right)\right|,$$
where the unstable spaces at $\rho$ and $g^t\rho$ are equipped with the induced Riemannian metric. It defines a H\"older continuous function on $S^*M$~\cite{KaHa} (that can be extended to any energy layer $\mathcal{E}_E$). We underline that this quantity tends to $0$ with an exponential rate as $t$ tends to infinity. Moreover, it satisfies the following multiplicative property
$$J^u_{t+t'}(\rho)=J^u_t(g^{t'}\rho)J^u_{t'}(\rho).$$
In the following, we will use the notation $J^u(\rho)=J^u_{1}(\rho)$ on $S^*M$.

\subsection{Topological pressure}\label{pa:toppress} 

In the statement of Theorem~\ref{t:maintheo1}, we made an assumption on the topological pressure of the subset $\Lambda$. Let us explain what informations are provided by this hypothesis following the observations of paragraph $5.2$ in~\cite{NZ09} -- see also~\cite{Pe}, chapter $4$ for general definitions of topological pressure. 

Fix a small $\delta>0$. Then, for every $E\in [\frac{1-\delta}{2},\frac{1+\delta}{2}]$, the set
$$\Lambda_{E}=\left\{(x,\xi)\in \mathcal{E}_E:\left(x,\frac{\xi}{\sqrt{2E}}\right)\in \Lambda\right\}$$
is hyperbolic. We fix a finite open cover $\mathcal{V}=(V_a)_{a\in A}$ of 
\begin{equation}\label{e:Lambda^delta}
\Lambda^{\delta}:=\bigcup_{\frac{1-\delta}{2}\leq E\leq
  \frac{1+\delta}{2}}\Lambda_E
\end{equation}
of diameter less than some small $\epsilon>0$ and such that, for every $a$ in $A$, one has
$$V_a\subset\cE^{\delta}:= p_0^{-1}\big(
(1/2 - \delta,1/2 +\delta) \big)\,. $$

For every integer $n_0$, the refined cover $\mathcal{V}^{(n_0)}$ is
the collection of the open sets
$$V_{\alpha}=\bigcap_{j=0}^{n_0-1}g^{-j}V_{\alpha_j},\ \text{where}\ \alpha=(\alpha_0,\alpha_1,\ldots,\alpha_{n_0-1})\in A^{n_0}.$$
Equivalently, $V_{\alpha}$ contains the points $\rho$, the trajectory
of which sits in $V_{\alpha_0}$ at time $0$, in $V_{\alpha_1}$ at time $1$, etc, and
in $V_{\alpha_{n-1}}$ at time $n-1$.

The fact that $P_{top}(\Lambda,g^t,\frac{1}{2}\log J^u)<0$ implies the
existence of a positive constant $P_0$ such that for $\delta$ small enough, for any cover of
small enough diameter (say $\epsilon\leq\epsilon_0$) and for any
$n_0\in\mathbb{N}$ large enough (depending on $\epsilon$), one can
extract a subcover $\mathcal{W}^{(n_0)}\subset\mathcal{V}^{(n_0)}$ of
$\Lambda^\delta$ such that
\begin{equation}\label{e:Tpressure}\sum_{V_{\alpha}\in \mathcal{W}^{(n_0)}}\sup_{\rho\in V_{\alpha}\cap\Lambda^{\delta}}\left\{\exp\left(\frac{1}{2}\int_0^{n_0}\log J^u\circ g^t(\rho)dt\right)\right\}\leq e^{-2n_0P_0}\end{equation}
(we may assume that any $V_{\alpha}\in \mathcal{W}^{(n_0)}$ intersects
$\Lambda^\delta$). Thanks to assumption~\eqref{e:ptylambda} on $\Lambda$, we can also verify that for $n_0$ large enough, one also has
\begin{equation}\label{e:assumpress}\sum_{V_{\alpha}\in \mathcal{W}^{(n_0)}}\sup_{\rho\in V_{\alpha}\cap\Lambda^{\delta}}\left\{\exp\left(\int_0^{n_0}\left(\frac{1}{2}\log J^u-a\right)\circ g^t(\rho)dt\right)\right\}\leq e^{n_0(\beta-P_0)}.\end{equation}

\begin{rema}\label{r:diameter} In our proof, we will fix an open cover of small diameter $\epsilon\leq\epsilon_0$ in order to get a subcover $\mathcal{W}^{(n_0)}$ satisfying~\eqref{e:assumpress}. Such a choice can be made for every $\epsilon\leq\epsilon_0$. Moreover, we choose such an epsilon in order to have $\epsilon\leq\tilde{\epsilon}_0/2$, where $\tilde{\epsilon}_0$ is the constant appearing in lemma~\ref{l:BowenRuelle}. We also take $\epsilon$ small enough to have the factor $1+\mathcal{O}(\epsilon)$ in estimate~\eqref{e:HDE0} smaller than $e^{\frac{P_0}{2}}$.
\end{rema}

Once $\mathcal{V}$ is chosen with the above requirements, we also select $n_0$ and
$\mathcal{W}^{(n_0)}$ such that \eqref{e:assumpress} holds. All these parameters will remain fixed for the rest of the proof.\\

We will call $W$ the
family of words $\alpha=(\alpha_0,\alpha_1,\ldots,\alpha_{n_0-1})$
corresponding to the elements $V_\alpha\in
\mathcal{W}^{(n_0)}$. We also complete the cover, by selecting an open set $V_{\infty}$ such
that $\overline{V_{\infty}}\cap\Lambda^{\delta}=\emptyset$, and such that
$$V_{\infty}\cup \left(\bigcup_{\alpha\in W}V_{\alpha}\right)=\cE^{\delta}.$$
Finally, we denote $\overline{W}=W\cup\{\infty\}$.

\subsection{A lemma from dynamical systems} Before entering the details of our proof, we mention the following lemma which is taken from the appendix of~\cite{BoRue75} (lemma $A.2$):

\begin{lemm}\label{l:BowenRuelle}  Let $\Lambda$ be a hyperbolic set in
  $S^*M$ satisfying assumption~\eqref{e:ptylambda}. There exists $\tilde{\epsilon}_0>0$ (depending on $M$,
 $\delta$ and $a(x)$) such that, for any $E\in [\frac12-\delta,\frac12+\delta]$, for any $p>0$ and any $\rho_2\in \cE^{\delta}$ satisfying
$$\exists \rho_1\in\Lambda^{\delta}\ \text{such that}\ \forall 0\leq k\leq p-1,\quad d_{T^*M}(g^k\rho_1,g^k\rho_2)\leq\tilde{\epsilon}_0,$$
one has
$$-\int_0^pa\circ g^s(\rho_2)ds\leq \beta p+\mathcal{O}(1),$$
where the constant involved in $\mathcal{O}(1)$ is independent of $\rho_2$ and $p$.
\end{lemm}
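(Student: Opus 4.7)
The plan is to reduce the bound for $\rho_2$ to assumption~\eqref{e:ptylambda} by showing that the two Birkhoff integrals
\[
  I_i(p):=-\int_0^p a\circ g^s(\rho_i)\,ds,\qquad i=1,2,
\]
differ by a constant depending on $\tilde\epsilon_0$ but not on $p$. The first step is to transfer~\eqref{e:ptylambda} from $\Lambda\subset S^*M$ to $\Lambda^\delta$. For $\rho_1=(x,\xi)\in\Lambda_E$ with $\tilde\rho_1=(x,\xi/\sqrt{2E})\in\Lambda$, the homogeneity of the geodesic flow yields $a\circ g^s(\rho_1)=a\circ g^{\sqrt{2E}\,s}(\tilde\rho_1)$, so a linear change of variable combined with~\eqref{e:ptylambda} gives $I_1(p)\le \beta p+\mathcal{O}(1)$ uniformly on $\Lambda^\delta$, using that $\sqrt{2E}$ is bounded away from $0$ and $\infty$.

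The second and main step is to compare the two orbits pointwise using the hyperbolic structure, which is available in a neighborhood of $\Lambda^\delta$ by continuity of the (un)stable cone fields. In local coordinates around $\rho_1$ adapted to the splitting $\mathbb{R}X_{p_0}\oplus E^u\oplus E^s\oplus\mathbb{R}\nabla p_0$, write $\rho_2=\exp_{\rho_1}(v^0+v^u+v^s+v^E)$. The two neutral components contribute only a bounded error: $v^0$ amounts to a time shift of size $|v^0|\le\tilde\epsilon_0$, while $v^E$ places the orbit of $\rho_2$ on a nearby energy layer which is a time-reparametrization of that of $\rho_1$ by the homogeneity argument of step one. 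For the hyperbolic components, the shadowing constraint $d(g^k\rho_1,g^k\rho_2)\le\tilde\epsilon_0$ on $0\le k\le p-1$, together with $\|dg^k|_{E^u}\|\gtrsim\lambda^{-k}$, forces $|v^u|\lesssim\lambda^p\tilde\epsilon_0$; symmetrically $|v^s|\le\tilde\epsilon_0$ contracts as $\|dg^k|_{E^s}\|\lesssim\lambda^k$, so that
\[
  d_{T^*M}(g^k\rho_1,g^k\rho_2)\le C(\lambda^k+\lambda^{p-k})\,\tilde\epsilon_0,\qquad 0\le k\le p.
\]
Since $a$ is Lipschitz on $M$, summing this geometric series and comparing the resulting Riemann sum to the integral yields $|I_1(p)-I_2(p)|\le C''$ uniformly in $p$, and the conclusion follows by combining with step one.

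The main obstacle is the rigorous implementation of the hyperbolic decomposition: the splitting $E^u\oplus E^s$ is only defined on $\Lambda^\delta$ itself, not on the perturbation $\rho_2\in\cE^\delta$, so the naive exponential-coordinates argument sketched above must be realized through the Stable Manifold theorem for hyperbolic sets (as in~\cite{KaHa}). Concretely, one produces a point $\rho_1^\star$ on the local unstable manifold of $\rho_1$ such that $\rho_2$ lies on the local stable manifold of $\rho_1^\star$ up to a small neutral (flow and energy) shift, with $d(\rho_1^\star,\rho_1)\lesssim\lambda^p\tilde\epsilon_0$; the constant $\tilde\epsilon_0$ in the statement is precisely the threshold for which this local product structure is well defined, and it depends only on $M$, $\delta$ and $a$. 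This is the content of lemma~A.2 in~\cite{BoRue75}.
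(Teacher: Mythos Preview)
Your proposal is correct and follows essentially the same strategy as the paper: both extend assumption~\eqref{e:ptylambda} to $\Lambda^\delta$ via the homogeneity of the geodesic flow (exploiting that $a$ is independent of $\xi$), then invoke the Bowen--Ruelle shadowing argument (lemma~A.2 of~\cite{BoRue75}) to show that the forward constraint forces an exponentially small unstable component, leading to a geometric-series bound on $|I_1(p)-I_2(p)|$. One minor imprecision: your displayed estimate $d(g^k\rho_1,g^k\rho_2)\le C(\lambda^k+\lambda^{p-k})\tilde\epsilon_0$ cannot literally hold for the original pair because the neutral (flow and energy) components do not decay; the paper handles this by introducing an intermediate point $\rho_3$ on the center--stable manifold and a time/energy shifted reference $g^\tau\tilde\rho_1$, which is exactly the local product structure you describe in your final paragraph via $\rho_1^\star$.
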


In particular, this lemma will allow us to extend the inequality \eqref{e:ptylambda} to a small (dynamical) neighborhood of
$\Lambda^{\delta}$. The proof of this lemma was given
in~\cite{BoRue75} where the authors treated the case of a single energy layer ($\delta=0$). Yet, their proof can be adapted to get a uniform $\tilde{\epsilon}_0$ on the energy interval $\cE^{\delta}$. We verify below that their argument can be extended to a small neighborhood of $S^*M$.

\begin{proof}  
The proof of this lemma relies on two observations:
\begin{itemize}
\item if the trajectory of $\rho_2$ remains close to the one of $\rho_1\in\Lambda^{\delta}$ in the future, then $\rho_2$ must have an ``exponentially small unstable component'';
\item the Birkhoff averages $-\int_0^pa\circ g^sds$ on $\Lambda^{\delta}$ are uniformly bounded by $\beta p+\mathcal{O}(1)$.
\end{itemize}

We closely follow the presentation of~\cite{BoRue75} and refer the reader to it for more details. We start by giving a precise meaning to the first observation. For that purpose, we write the following decomposition of the tangent space, for any $\rho=(x,\xi)\in\Lambda^{\delta}$,
$$T_{\rho}\cE^{\delta}=E^0(\rho)\oplus E^s(\rho)\oplus E^u(\rho),$$
where $E^0(\rho)$ is the vector space generated by $X_{p_0}(\rho)$ and the energy direction $\rho(t)=(x,t\xi)$ and $E^{u/s}$ are still the unstable/stable directions. 
For $v$ in $T_{\rho}\cE^{\delta}$, we denote $v=v_0+v_s+v_u$ the decomposition adapted to these subspaces. For $\epsilon'>0$ small enough and
any $\rho\in\Lambda^{\delta}$, one can construct a smooth
chart\footnote{Here, $H(\epsilon')$ means that we consider a ball of
  radius $\epsilon'$ around $0$ in the subspace $H$.}
$\phi_{\rho}:T_{\rho}\cE^{\delta}(\epsilon')\rightarrow \cE^{\delta}$
satisfying
$$\phi_{\rho}\left[(E^0(\rho)+E^s(\rho))(\epsilon')\right]\subset W^{cs}(\rho),\ \text{and}\ \phi_{\rho}\left[(E^0(\rho)+E^u(\rho))(\epsilon')\right]\subset W^{cu}(\rho),$$
with
$$W^{cs/cu}(x,\xi)=\bigcup_{t\in\mathbb{R}}\bigcup_{\frac{1-\delta}{2}\leq E\leq\frac{1+\delta}{2}} W^{s/u}\left(g^t\left(x,\sqrt{2E}\frac{\xi}{\|\xi\|}\right)\right),$$
where $W^{s/u}(\rho')$ denote the stable/unstable manifold at point $\rho'$. Moroeover, one can choose $\phi_{\rho}$
such that $d_0\phi_{\rho}$ is given by the identity. The construction
is a straightforward adaptation of property $A.1$ in~\cite{BoRue75} to
a small neighborhood of $S^*M$.

For $\epsilon'>0$ small enough, introduce now
$$F_{\rho}=\phi_{g^1\rho}^{-1}\circ g^1\circ \phi_{\rho}:T_{\rho}\cE^{\delta}(\epsilon')\rightarrow T_{g^1\rho}\cE^{\delta},$$
which is tangent to $d_{\rho}g^1$ at the origin. Define also 
$$D(\epsilon', p):=\{v\in T_{\rho}\cE^{\delta}:\forall0\leq k\leq p-1,\ \|F_{g^k\rho}\circ\ldots F_{\rho}v\|_{g^k\rho}\leq\epsilon'\}.$$
Let $v=v_0+v_s+v_u$ be an element in $D(\epsilon', p)$. One can mimick again the proof of~\cite{BoRue75} (precisely the proof of inequality $A.5$ in this reference) and verify that there exist uniform constants $C>0$ and $0<\lambda<1$ such that
\begin{equation}\label{e:borubound}\forall 0\leq k\leq p-1,\ \|F_{g^k\rho}\circ\ldots F_{\rho}(v_0+v_s+v_u)-F_{g^k\rho}\circ\ldots F_{\rho}(v_0+v_s)\|_{g^{k}\rho}\leq C\epsilon'\lambda^{p-1-k}.\end{equation}
This upper bound is obtained thanks to the hyperbolicity assumption (combined to a Taylor formula near the origin). This result expresses the first property mentionned at the beginning of our proof. Precisely, it shows that a point which remains close to $\rho_1\in\Lambda^{\delta}$ during a time $p$ has an exponentially small unstable component (in our system of charts).\\

We will now use this family of charts to prove lemma~\ref{l:BowenRuelle}. First, we observe that there exists a constant $C>0$ such that
$$\forall \rho_1,\rho_2\in\cE^{\delta},\  \left|\int_0^1a\circ g^s(\rho_1)ds-\int_0^1a\circ g^s(\rho_2)ds\right|\leq Cd(\rho_1,\rho_2).$$
Fix now $\rho_1=(x_1,\xi_1)$ in $\Lambda^{\delta}$ and $\rho_2$ in $\cE^{\delta}$ satisfying
$$\forall 0\leq k\leq p-1,\quad d_{T^*M}(g^k\rho_1,g^k\rho_2)\leq\tilde{\epsilon}_0,$$
where $\tilde{\epsilon}_0$ is some small positive parameter. In
particular, we choose it small enough to have
$v=\phi_{\rho_1}^{-1}(\rho_2)$ belongs to
$T_{\rho}\cE^{\delta}(\epsilon'/2)$ for every
$\rho_1\in\Lambda^{\delta}$ and any $\rho_2\in\cE^{\delta}$ satisfying
$d(\rho_1,\rho_2)\leq\tilde{\epsilon}_0$. Define then $w=v_0+v_s$ and
introduce $\rho_3=\phi_{\rho_1}(w)$. Thanks to our construction, one
has $\rho_3\in W^s(g^{\tau}\tilde{\rho}_1)$ for some $|\tau|\leq
C_0\epsilon'$ and some $\tilde{\rho}_1=(x_1,E'\xi_1)\in \Lambda_{E'}$ with $|E'|\leq
C_0\epsilon'$. Thanks to the fact that $a$ does not depend on $\xi$
and that $\tilde{\rho}_1$ belongs to $\Lambda_{E'}$, the
assumption~\eqref{e:ptylambda} directly implies that
$$-\int_0^pa\circ g^s(\tilde{\rho}_1)ds\leq\beta p+\mathcal{O}(1),$$
where the constant involved in the remainder is uniform for $|E'|\leq
C_0\epsilon'$. To extend the assumption~\eqref{e:ptylambda} to every
energy layer $\Lambda_{E'}$, we have crucially used the fact that $a$
is independent of $\xi$, and the homogeneity of the geodesic flow --- see remark~\ref{r:generalization} below for generalizations of this fact.

We will now compare the average along the trajectory of $\rho_2$ with the average along the trajectory of $\tilde{\rho}_1$. Thanks to the upper bound~\eqref{e:borubound} and to the construction of $\rho_3$, one has that, for any $0\leq k\leq p-1$, 
$$d(g^k\rho_3,g^k\rho_2)\leq C_1\epsilon'\lambda^{p-1-k}\quad \text{and}\quad d(g^{k+\tau}\tilde{\rho}_1,g^k\rho_3)\leq C_1'\epsilon'(\lambda')^{k},$$ 
for some uniform $C_1'>0$ and $0<\lambda'<1$. We now use these properties to bound $-\int_0^pa\circ g^s(\rho_2)ds$. We write
$$-\int_0^pa\circ g^s(\rho_2)ds\leq-\int_0^pa\circ g^s(\tilde{\rho}_1)ds
+\left|\int_0^pa\circ g^s(\tilde{\rho}_1)ds-\int_0^pa\circ g^{s+\tau}(\tilde{\rho}_1)ds\right|$$
$$+\sum_{k=0}^{p-1}\left(\left|\int_0^1a\circ g^{s+k+\tau}(\tilde{\rho}_1)ds-\int_0^1a\circ g^{s+k}(\rho_3)ds\right|
+\left|\int_0^1a\circ g^{s+k}(\rho_3)ds-\int_0^1a\circ g^{s+k}(\rho_2)ds\right|\right) 
.$$
Using the different properties mentioned above, one gets 
$$-\int_0^pa\circ g^{s}(\rho_2)ds\leq\beta p+\mathcal{O}(1)+2C_0\epsilon'\|a\|_{\infty}+\frac{CC_1'\epsilon'}{1-\lambda'}+\frac{CC_1\epsilon'}{1-\lambda},$$
which is the expected conclusion.
\end{proof}

\begin{rema}\label{r:generalization} 
At this point, we would like to mention something on the generalization of Theorem~\ref{t:maintheo1} to more general nonselfadjoint operators as in~\cite{Sj00}. In order to adapt the previous lemma (which will be crucial in our proof) for more general Hamiltonian flows, one has to make the assumption that the Birkhoff averages of the corresponding damping function are bounded by $\beta p+\mathcal{O}(1)$ for every trajectory in a small neighborhood $\Lambda^{\delta}$ of the hyperbolic subset $\Lambda$. Here this property was satisfied due to the specific structure of the ``damping function'' $a$ and of the geodesic flow.
 
\end{rema}

\section{Proof of the main Theorem}\label{s:proof}

We fix $\beta$ a spectral parameter. Let $(\psi_{\hbar})_{0<\hbar\leq \hbar_0}$ be a sequence of normalized vector in $L^2(M)$ such that
$$\mathcal{P}(\hbar,z)\psi_{\hbar}=z(\hbar)\psi_{\hbar},$$
where $z(\hbar)$ satisfies
\begin{equation}
 \label{e:spectralpara}
\forall 0<\hbar\leq \hbar_0,\ z(\hbar)=\frac{1}{2}+\mathcal{O}(\hbar)\ \text{and}\ \text{Im}\ z(\hbar)\geq\beta\hbar+o\left(\hbar|\log\hbar|^{-1}\right).
\end{equation}

\begin{rema}
 Such a family may be defined by a discrete sequence $\hbar_n\rightarrow 0$ as $n$ tends to infinity. Yet, in order to avoid heavy notations and to fit semiclassical notations~\cite{DS, EZ}, we will use the standard convention $\hbar\rightarrow 0$ to denote the limit.
\end{rema}

\subsection{Concentration properties and discretization of the energy layer}

In this paragraph, we describe the setting we will use to prove
Theorem~\ref{t:maintheo1}. We introduce $\Lambda$ a compact,
hyperbolic and invariant subset of $S^*M$ satisfying the assumption \eqref{e:ptylambda}.
As in paragraph~\ref{pa:toppress}, we fix a small neighborhood of size
$\delta>0$ around $S^*M$ (thanks to our assumption on $\text{Re}\
z(\hbar)$, the eigenmodes are microlocalized on $S^*M$ when $\hbar$ tends to $0$).

We make the assumption that $P_{top}\left( \Lambda,g^t,\frac{1}{2}\log J^u\right)<0$ and we will use the open covers introduced in~$\S \ref{pa:toppress}$.

\subsubsection{Cutoff functions near $\Lambda$}\label{ss:localize}

We fix $0<\overline{\nu}<1/2$ a positive parameter and we introduce a cutoff function $0\leq \Theta_{\Lambda,\hbar,\overline{\nu}}\leq 1$ around the set $\Lambda$. This function belongs to $\mathcal{C}^{\infty}_c(T^*M)$ and satifies the following assumptions:
\begin{itemize}
 \item $\Theta_{\Lambda,\hbar,\overline{\nu}}(x,\xi)=0$ for $\|\xi\|^2\notin[1/4,2]$;
 \item $\Theta_{\Lambda,\hbar,\overline{\nu}}(x,\xi)=\Theta_{\Lambda,\hbar,\overline{\nu}}(x,\xi/\|\xi\|)$ for $\|\xi\|^2\in[1/2,3/2]$;
 \item for every $\rho$ in $S^*M$ satisfying $d(\rho,\Lambda)\leq\hbar^{\overline{\nu}}/2$, $\Theta_{\Lambda,\hbar,\overline{\nu}}(\rho)=1$;
 \item for every $\rho$ in $S^*M$ satisfying $d(\rho,\Lambda)\geq2 \hbar^{\overline{\nu}}$, $\Theta_{\Lambda,\hbar,\overline{\nu}}(\rho)=0$;
 \item the growth of the derivatives of  $\Theta_{\Lambda,\hbar,\overline{\nu}}$ is controlled by powers of $\hbar^{-\overline{\nu}}$ and so the functions are amenable to $\hbar$-pseudodifferential calculus~\cite{DS, EZ} (see also appendix~\ref{a:pdo} for a brief reminder);
\end{itemize}

We say that such a function is $(\Lambda,\hbar,\overline{\nu})$\emph{-localized}. Our goal is to prove that
\begin{equation}\label{e:contradictionlim}
\liminf_{\hbar\rightarrow0}\left\langle\Op_{\hbar}\left(1- \Theta_{\Lambda,\hbar,\overline{\nu}}\right) \psi_{\hbar},\psi_{\hbar}\right\rangle\geq c_{\Lambda,a,\overline{\nu}}>0,
\end{equation}
for some positive constant $c_{\Lambda,a,\overline{\nu}}$ that depends only on $\Lambda$, $a$ and $\overline{\nu}$ (and, in particular, not on the sequence $(\psi_{\hbar})_{\hbar\rightarrow 0}$).

\subsubsection{Smooth discretization of the energy layer}\label{s:discretization}

We now introduce a smooth partition of unity associated to our open
cover $(V_{\alpha})_{\alpha\in\overline{W}}$, namely a family of
smooth functions $P_\alpha\in C^\infty_c(V_\alpha,[0,1])$ which
satisfy
$$
\sum_{\alpha\in \overline{W}}P_{\alpha}(\rho)=1\quad \text{near}\ \mathcal{E}^{\delta/2}.
$$

This smooth partition can be quantized into a family of
pseudodifferential operators
$(\pi_{\alpha}\in\Psi^{-\infty,0}(M))_{\alpha\in\overline{W}}$ such
that for each $\alpha\in\overline{W}$, $P_\alpha$ is the principal symbol of $\pi_\alpha$, and
$$
WF_{\hbar}(\pi_{\alpha})\subset V_{\alpha},\quad
\pi_{\alpha}^*=\pi_{\alpha}\quad \text{and}\quad
\sum_{\alpha}\pi_{\alpha}=\text{Id}\ \text{microlocally near}\
\mathcal{E}^{\delta/2},
$$

We also introduce the following ``refined'' operators:
$$\forall\gamma=(\gamma^0,\gamma^1,\ldots,\gamma^{n-1})\in
\overline{W}^n,\quad 
\Pi_{\gamma}:=\U_{\hbar}^{n_0}\pi_{\gamma^{n-1}}\U_{\hbar}^{n_0}\ldots\pi_{\gamma^1}\U_{\hbar}^{n_0}\pi_{\gamma^0},\quad 
\tPi_{\gamma}:= \Pi_{\gamma} \U_{\hbar}^{-nn_0}\,.
$$
This new family of operators satisfies
\begin{equation}\label{e:nonsapartition}
 \sum_{|\gamma|=n}\Pi_{\gamma}=\U_{\hbar}^{nn_0}\ \text{microlocally near}\ \mathcal{E}^{\delta/2}\,,
\end{equation}
equivalently 
$$
 \sum_{|\gamma|=n}\tPi_{\gamma}=\text{Id}\ \text{microlocally near}\ \mathcal{E}^{\delta/2}\,,
$$
uniformly for times $0\leq n\leq C|\log\hbar|$, for any fixed $C>0$. 

We notice that for $n=|\gamma|$ finite, each operator $\tPi_\gamma$
admits for principal symbol 
\begin{equation}\label{e:tP}
\tP_{\gamma}:= P_{\gamma^{n-1}}\circ g^{-n_0}\ldots P_{\gamma^1}\circ
  g^{(1-n)n_0}P_{\gamma^0}\circ g^{-nn_0}\,,
\end{equation}
which is supported in the ``backward refined
set''\footnote{$\tV_{\gamma}$ contains the points $\rho$ which were sitting in
  $V_{\gamma^{n-1}}$ at time $-n_0$, in $V_{\gamma^{n-2}}$ at time
  $-2n_0$,..., in $V_{\gamma^{0}}$ at time
  $-nn_0$. The word $\gamma$ thus describes the backward trajectory of
$\rho$.}
$$
\tV_{\gamma}:=g^{n_0}V_{\gamma^{n-1}} \cap g^{2n_0}V_{\gamma^{n-2}}\cap\cdots \cap g^{nn_0}V_{\gamma^0}\,.
$$
In subsection~\ref{pa:largesums} we will see that this connection between
$\tPi_\gamma$ and $\tP_\gamma$ extends to times $n\leq
\kappa_0|\log\hbar|$, for $\kappa_0>0$ small enough. 

We already have two families of $n$-cylinders: the full set of $n$-cylinders
$$\overline{W}^n= \left\{(\gamma^0,\gamma^1,\ldots,\gamma^{n-1}):\ \forall 0\leq j\leq n-1,\ \gamma^j\in\overline{W}\right\},$$
covering the whole energy slab $\cE^\delta$, 
and the set of $n$-cylinders
$$W^n=\left\{(\gamma^0,\gamma^1,\ldots,\gamma^{n-1}):\ \forall
  0\leq j\leq n-1,\ \gamma^j\in W\right\},$$
corresponding to trajectories remaining $\eps$-close to
$\Lambda^\delta$ during a time $nn_0$.

We will distinguish a subfamily of $n$-cylinders, corresponding to points
\emph{very close to} $\Lambda$. Namely, we define
$\Lambda_n\subset \overline{W}^n$ to be the set of $n$-cylinders satisfying
$$
\text{supp}\big(\Theta_{\Lambda,\hbar,\overline{\nu}}\times\tP_{\gamma}\big)\neq\emptyset.
$$

\subsubsection{Preliminary lemmas}

We will now make two simple (but crucial) observations that will be at the heart of our proof.

\begin{lemm}\label{r:tubesize} There exists $\kappa_0>0$ small enough (depending on $\overline{\nu}$, $\delta$, $\Lambda$ and $V_\infty$) such that, for $\hbar$ small enough, for any point $\rho\in \supp\big(\Theta_{\Lambda,\hbar,\overline{\nu}}\times\tP_{\gamma}\big)$ and any $|t|\leq \kappa_0|\log\hbar|$, one has
$$d(g^t(\rho),\Lambda^\delta)\leq \hbar^{\overline{\nu}/2}.$$
In particular, $\Lambda_n\subset W^n$.
\end{lemm}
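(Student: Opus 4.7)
The plan is to combine two elementary observations: the joint support condition $\rho\in\supp(\Theta_{\Lambda,\hbar,\overline{\nu}}\cdot\tP_\gamma)$ forces $\rho$ to lie within $O(\hbar^{\overline{\nu}})$ of $\Lambda^\delta$, and the geodesic flow on the precompact region $\cE^\delta$ has Lipschitz constant at most $C_0 e^{L|t|}$ for some $L>0$ depending only on $(M,\delta)$. Propagating the closeness along the flow and compensating the exponential stretch by a small enough $\kappa_0$ gives the claim.

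First I would locate a comparison point on $\Lambda^\delta$ near $\rho$. Since $\supp\tP_\gamma\subset V_{\gamma^0}\cap\cdots\subset\cE^\delta$, the energy $E:=\|\xi\|_x^2/2$ of $\rho=(x,\xi)$ lies in $(1/2-\delta,1/2+\delta)$. Taking $\delta<1/4$, this forces $\|\xi\|_x^2\in[1/2,3/2]$, so the homogeneity property $\Theta(x,\xi)=\Theta(x,\xi/\|\xi\|_x)$ applies; the condition $\Theta(\rho)\neq 0$ then yields some $\rho_0=(x_0,\xi_0)\in\Lambda$ with $d_{S^*M}((x,\xi/\|\xi\|_x),\rho_0)\leq 2\hbar^{\overline{\nu}}$. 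Rescaling to the correct energy, the point $\rho_E:=(x_0,\sqrt{2E}\,\xi_0)$ belongs to $\Lambda_E\subset\Lambda^\delta$ and a direct computation gives $d(\rho,\rho_E)\leq C_\delta\hbar^{\overline{\nu}}$.

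Next, since each $\Lambda_E$ is $g^t$-invariant (this uses the homogeneity of the geodesic flow), one has $g^t\rho_E\in\Lambda^\delta$ for all $t$, so that
$$d(g^t\rho,\Lambda^\delta)\leq d(g^t\rho,g^t\rho_E)\leq C_0 C_\delta\,e^{L|t|}\hbar^{\overline{\nu}}.$$
Choosing any $\kappa_0<\overline{\nu}/(2L)$, the right-hand side is $\leq\hbar^{\overline{\nu}/2}$ as soon as $|t|\leq\kappa_0|\log\hbar|$ and $\hbar$ is small enough. This yields the main assertion of the lemma.

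For the inclusion $\Lambda_n\subset W^n$, with $n$ in the range $nn_0\leq\kappa_0|\log\hbar|$ in which the lemma will be applied: given $\gamma\in\Lambda_n$, pick $\rho\in\supp(\Theta\cdot\tP_\gamma)$ and set $\rho':=g^{-nn_0}\rho$, so that $g^{jn_0}\rho'\in V_{\gamma^j}$ for $0\leq j\leq n-1$. Applying the bound above at time $t=(j-n)n_0$ gives $d(g^{jn_0}\rho',\Lambda^\delta)\leq\hbar^{\overline{\nu}/2}$. Because $\overline{V_\infty}\cap\Lambda^\delta=\emptyset$ provides a fixed positive gap, for $\hbar$ small this rules out $\gamma^j=\infty$ for any $j$, so $\gamma\in W^n$. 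The only delicate point in the whole argument is the exponent matching $e^{L|t|}\hbar^{\overline{\nu}}\leq\hbar^{\overline{\nu}/2}$, resolved by the choice $\kappa_0<\overline{\nu}/(2L)$; otherwise everything rests on the crude Lipschitz bound for $g^t$ on $\cE^\delta$ and the invariance of $\Lambda^\delta$ (no use of the finer hyperbolic structure on $\Lambda$ is actually required).
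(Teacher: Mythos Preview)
Your proof is correct and follows essentially the same route as the paper: observe that any $\rho$ in the joint support lies within $O(\hbar^{\overline{\nu}})$ of $\Lambda^\delta$, then use an exponential-in-time Lipschitz bound for $g^t$ together with the invariance of $\Lambda^\delta$ to propagate this closeness for times $|t|\leq\kappa_0|\log\hbar|$, choosing $\kappa_0$ small enough to retain a $\hbar^{\overline{\nu}/2}$ bound. Your write-up is in fact more detailed than the paper's (which just states the key inequality), and your closing remark is on point: the paper attributes the exponential growth bound to ``the hyperbolicity assumption'', but as you note, only the crude Gronwall/Lipschitz estimate on the compact slab $\cE^\delta$ and the flow-invariance of $\Lambda^\delta$ are actually used here.
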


The proof of this lemma derives from the following observation. Any point in
$\rho\in \text{supp}\big(\Theta_{\Lambda,\hbar,\overline{\nu}}\times\tP_{\gamma}\big)$
is at distance $\leq 2\hbar^{\overline{\nu}}$ from
$\Lambda^\delta$. Due to the hyperbolicity assumption, the distance from $\Lambda^\delta$ can grow at most
exponentially with time: there is a uniform $0<\lambda<1$ such that
\begin{equation}\label{e:hypdispersion}
d(g^t(\rho),\Lambda^\delta)\leq
C\hbar^{\overline{\nu}}\,\lambda^{|t|}\,,\quad \forall t\in\IR\,.
\end{equation}

This is an important property as it will allow us to apply hyperbolic dispersive estimates to cylinders in $\Lambda_n$ -- see paragraph~\ref{pa:hypdispest}. If we had chosen a larger ``tube'' around $\Lambda$, our argument would a priori not work as we will need to work with logarithmic times in $\hbar$ -- see paragraph~\ref{pa:submult}. We will also need the following feature of cylinders in $\Lambda_n$.

\begin{lemm}\label{r:bowenruelle}  There exists $\kappa_0>0$ small enough (depending on $\overline{\nu}$, $\delta$,
$\Lambda$ and $V_\infty$) such that, for $\hbar$ small enough, any $n\leq[\kappa_0|\log\hbar|]$, any $\gamma\in \Lambda_n$ and any $\rho\in\supp(\tP_{\gamma})$, one has
$$-\int_0^{nn_0-1}a\circ g^{s-nn_0}(\rho)ds\leq (nn_0-1)\beta + \mathcal{O}(1).$$ 
\end{lemm}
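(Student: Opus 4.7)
The plan is to apply Lemma~\ref{l:BowenRuelle} to the point $\rho_2:=g^{-nn_0}\rho$ with $p=nn_0-1$; after the change of variable $s\mapsto s-nn_0$ in the integral, this will immediately yield the stated bound. To invoke Lemma~\ref{l:BowenRuelle} I need a single point $\rho_1\in\Lambda^{\delta}$ whose forward orbit $\tilde{\epsilon}_0$-shadows that of $\rho_2$ at every integer time $0,\ldots,nn_0-2$. I will construct such a $\rho_1$ using an auxiliary point $\rho_0\in\supp(\Theta_{\Lambda,\hbar,\overline{\nu}}\cdot\tP_{\gamma})$; this support is non-empty precisely because $\gamma\in\Lambda_n$.

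Since $\rho_0$ lies within $2\hbar^{\overline{\nu}}$ of $\Lambda$, Lemma~\ref{r:tubesize} gives $d(g^t\rho_0,\Lambda^{\delta})\leq\hbar^{\overline{\nu}/2}$ for every $|t|\leq\kappa_0|\log\hbar|$. I then pick any $\rho_1\in\Lambda^{\delta}$ with $d(\rho_1,g^{-nn_0}\rho_0)\leq\hbar^{\overline{\nu}/2}$, and by invariance its whole forward orbit remains in $\Lambda^{\delta}$. It remains to check that $d(g^s\rho_1,g^{s-nn_0}\rho)\leq\tilde{\epsilon}_0$ for $s=0,\ldots,nn_0-1$, which I control through the intermediate point $g^{s-nn_0}\rho_0$ using the triangle inequality. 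On the one hand, both $\rho$ and $\rho_0$ belong to $\tV_{\gamma}$; unpacking the definition of the refined cells $V_{\gamma^k}\in\mathcal{V}^{(n_0)}$ as intersections of members of the original cover $\mathcal{V}$, one sees that $g^{-m}\rho$ and $g^{-m}\rho_0$ lie in a common element of $\mathcal{V}$ at every integer time $m\in\{1,\ldots,nn_0\}$, so their distance is at most $\epsilon$, and at most $C\epsilon$ at intermediate times by smoothness of the flow. On the other hand, $\rho_1$ and $g^{-nn_0}\rho_0$ start within $\hbar^{\overline{\nu}/2}$ of each other and both orbits stay inside a small neighborhood of the hyperbolic set $\Lambda^{\delta}$; the standard stability estimates for nearby orbits (of the same flavor as those used in the proof of Lemma~\ref{l:BowenRuelle} via the charts $\phi_{\rho}$) give an exponential bound of the form $C'\hbar^{\overline{\nu}/2}\lambda^{-s}$ on their distance, which remains $\hbar$-small throughout $s\leq nn_0\leq\kappa_0|\log\hbar|$ provided $\kappa_0$ is chosen small enough compared to $\overline{\nu}$ and the hyperbolicity rate $\lambda$. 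Taking $\epsilon$ below a small multiple of $\tilde{\epsilon}_0$, which is allowed by Remark~\ref{r:diameter}, and combining both estimates yields the required shadowing bound.

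The main obstacle is precisely the second bound above: one must shadow the orbit $g^{s-nn_0}\rho$ by an orbit lying entirely in $\Lambda^{\delta}$ over a time window of logarithmic length in $\hbar$. This forces a joint smallness condition on $\kappa_0$, $\overline{\nu}$ and $\epsilon$, but these constraints are compatible with (and possibly only slightly stronger than) those already imposed in Lemma~\ref{r:tubesize} and in $\S\ref{pa:toppress}$, so the same $\kappa_0$ can be used throughout. Once the shadowing bound is established, Lemma~\ref{l:BowenRuelle} applied to $(\rho_1,\rho_2)$ concludes the proof.
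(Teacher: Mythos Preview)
Your argument is correct and essentially the same as the paper's: both pick an auxiliary point $\rho_0=\rho_\gamma\in\supp(\Theta_{\Lambda,\hbar,\overline{\nu}}\tP_\gamma)$, use the small diameter of $\mathcal{V}$ to control $d(g^{-k}\rho,g^{-k}\rho_0)$, choose a nearby reference point in $\Lambda^\delta$, and invoke the exponential orbit-separation bound over the logarithmic time window before applying Lemma~\ref{l:BowenRuelle}. The only cosmetic difference is that you anchor the reference point $\rho_1\in\Lambda^\delta$ near $g^{-nn_0}\rho_0$ (via Lemma~\ref{r:tubesize}), whereas the paper anchors it near $\rho_0$ itself and then propagates backwards; the resulting smallness conditions on $\kappa_0$ are the same.
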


\begin{proof} The proof relies on lemma~\ref{l:BowenRuelle}. 
Choose $\rho\in\text{supp}(\tP_{\gamma})$. By definition of $\Lambda_n$, there exists
$\rho_{\gamma}\in
\text{supp}(\Theta_{\Lambda,\hbar,\overline{\nu}}\times\tP_{\gamma})$. The diameter
of the open cover has been selected to be smaller than
$\tilde{\epsilon}_0/2$, where $\tilde{\epsilon}_0$ is the parameter of lemma~\ref{l:BowenRuelle}. Hence, since
$g^{-k}(\rho)$ and $g^{-k}(\rho_\gamma)$ belong to the same open sets
$V_{a_k}$ for
all times $k=1,\ldots,nn_0$, we have
$$\forall 1\leq k\leq n_0n,\ d\left(g^{-k}(\rho),g^{-k}\left(\rho_{\gamma}\right)\right)\leq\frac{\tilde{\epsilon}_0}{2}.$$
Since $\rho_{\gamma}$ is at distance $\leq 2\hbar^{\overline{\nu}}$
from $\Lambda^\delta$, we can choose a point
$\tilde{\rho}_{\gamma}\in\Lambda^\delta$ such that
$d(\rho_\gamma,\tilde{\rho}_{\gamma})\leq
2\hbar^{\overline{\nu}}$. For $\kappa_0$ small enough, one gets $d(g^{-t}(\rho_\gamma),g^{-t}(\tilde{\rho}_\gamma))\leq
C\hbar^{\overline{\nu}/2}$ for all $0\leq t\leq \kappa_0|\log\hbar|$ -- see property~\eqref{e:hypdispersion}. As
a consequence, for $\hbar$ small enough,
$$\forall 1\leq k\leq nn_0,\quad
d\left(g^{-k}(\rho),g^{-k}\left(\tilde{\rho}_{\gamma}\right)\right)\leq\tilde{\epsilon}_0.
$$
Using lemma~\ref{l:BowenRuelle}, we deduce that
\begin{equation}\label{l:Bowenproof}
-\int_0^{nn_0-1}a\circ
g^{s-nn_0}(\rho)ds\leq\beta(nn_0-1)+ \mathcal{O}(1).
\end{equation}
As in the previous lemma, if we want to work with logarithmic times in
$\hbar$, we need to have a tube of size $\hbar^{\overline{\nu}}$
around $\Lambda$ in order to obtain a remainder uniform w.r.t. $\hbar$. 

\end{proof}

We underline that, in both lemmas, our choice of $\kappa_0>0$ depends on $M$, on $\Lambda$ and on our choice of open cover, of $n_0$ and of $\overline{\nu}$.

\subsection{Proof of Theorem~\ref{t:maintheo1}} 

\label{ss:proof}

We are now in the position to give the proof of our main result. Our
strategy is to prove a positive lower bound for the norm
$$\Big\|\sum_{\gamma\in \Lambda_n^c}\tPi_{\gamma}\psi_{\hbar}\Big\|,$$
where $\Lambda_n^c$ is the complementary of $\Lambda_n$ in
$\overline{W}^n$ and $n$ is a ``short logarithmic time''. It will roughly say that a positive part of the mass of $\psi_{\hbar}$ is far from $\Lambda$.

We will first use a hyperbolic
dispersive estimate~\cite{An08, NZ09, Sch10} in order to obtain a
lower bound for a similar quantity corresponding to cylinders of
length $kn$ -- see paragraph~\ref{pa:hypdispest}, with $k\gg 1$ fixed
($kn$ is a ``large logarithmic time''). Then, by a subadditive
argument (paragraph~\ref{pa:submult}), we will derive the desired
lower bound for cylinders of length $n$. Finally, we show in
paragraph~\ref{pa:semiclapprox} how to derive
Theorem~\ref{t:maintheo1} from this lower bound.

\subsubsection{Different scales of times}\label{pa:timescales}

First, we select open covers $\cV$ and $\cW^{(n_0)}$ as in
paragraph~\ref{pa:toppress}, in particular the diameter of $\cV$ is
small enough to get the requirements of remark~\ref{r:diameter}.

We will then fix some $\kappa_0>0$ small enough, so that the bound of
lemma~\ref{r:tubesize} applies, and also such that 
the quantum evolution of observables supported in the energy slab
$\cE^\delta$ is under control for times $|t|\leq \kappa_0n_0|\log\hbar|$
(see subsection~\ref{pa:egorov} on this matter). We then
introduce a ``short'' logarithmic time
\begin{equation}\label{e:Ehrenfest}
n(\hbar):=\left[\kappa_0|\log\hbar|\right]. 
\end{equation}
In particular, the arguments of
lemma~\ref{r:bowenruelle} and of paragraphs~\ref{pa:submult}
and~\ref{pa:semiclapprox} will be valid for $0\leq n\leq
n(\hbar)$. The choice of $\kappa_0$ depends on the open cover $\cV$,
on the damping function $a$, on $n_0$, on $\delta$ (the size of the energy slab we work on) and on the exponent $\overline{\nu}$ used to define $\Theta_{\hbar,\Lambda,\overline{\nu}}$.

We fix $k\geq 2$ a large positive integer,  satisfying
$k\kappa_0>\frac{d}{n_0P_0}$ -- see paragraph~\ref{pa:hypdispest}. We
will then define a second (``large'')
logarithmic time $kn(\hbar)$.

We will omit the dependence $n(\hbar)=n$ in $\hbar$ to avoid heavy notations.

\begin{rema} We underline that the different parameters we have introduced so far (namely $n_0$, $\delta$, $\kappa_0$, $k$, $P_0$ and the open cover) are chosen in a way that depends only on $\Lambda$, $a$ and $\overline{\nu}$. They will not depend on our choice of sequence $\psi_{\hbar}$.
\end{rema}

\subsubsection{Using hyperbolic dispersive estimates}\label{pa:hypdispest}

The first step of our proof is to use the
property~\eqref{e:nonsapartition} (still valid for ``large''
logarithmic times) and the fact that $\psi_\hbar$ is an eigenmode of
$\cU_{\hbar}$, in order to write
\begin{equation}\label{e:proofstep0}
\sum_{\Gamma\in\overline{W}^{kn}}\langle\Pi_{\Gamma}\psi_{\hbar},\psi_{\hbar}\rangle=\exp\left(-\frac{\imath knn_0z(\hbar)}{\hbar}\right)+\mathcal{O}(\hbar^{\infty}).
\end{equation}
Here we have implicitly used the fact that the eigenstate $\psi_\hbar$
is microlocalized on the energy layer $\cE_{1/2}=S^*M$.  
Then, we split the above sum using the decomposition of $\overline{W}^{kn}$ as
$$\overline{W}^{kn}=\Lambda_n^k\sqcup\left(\Lambda_n^k\right)^c,$$
where $\Lambda_n^k=\{\Gamma^0\Gamma^1\ldots\Gamma^{k-1}:\ \forall 0\leq j\leq k-1,\ \Gamma^j\in\Lambda_n\}$ and $\left(\Lambda_n^k\right)^c$ is the complementary of $\Lambda_n^k$ in $\overline{W}^{kn}$. We find then
\begin{equation}\label{e:proofstep1}
\sum_{\Gamma\in\Lambda_n^k}\langle\Pi_{\Gamma}\psi_{\hbar},\psi_{\hbar}\rangle+\sum_{\Gamma\in(\Lambda_n^k)^c}\langle\Pi_{\Gamma}\psi_{\hbar},\psi_{\hbar}\rangle=\exp\left(-\frac{\imath knn_0z(\hbar)}{\hbar}\right)+\mathcal{O}(\hbar^{\infty}).
\end{equation}
We will now use a hyperbolic dispersion estimate to bound the sum over
$\Lambda_n^k$ which is a subset of $W^{nk}$ -- see
lemma~\ref{r:tubesize}. We are almost in the situation of~\cite[\S
7.2]{NZ09}, except that our generator $\cP(\hbar,z)$ is
nonselfadjoint. Still, like in~\cite{Sch10}, we can use the
strategy of~\cite[Sec.4]{NZ09} by taking into account the
nonselfadjoint contribution in the WKB Ansatz.
The output is that, for
every $k\geq 2$, there exist constants $C_k>0$ and $\hbar_k>0$
(depending on $k$, on $a$, on the choice of the partition and on
$\Lambda$) such that, for any $\hbar\leq\hbar_k$ and any cylinder
$\Gamma=\alpha_0\cdots\alpha_{nk-1}\in W^{nk}$, the following
hyperbolic dispersive estimate holds:
\begin{equation}\label{e:HDE0}
\big\|\Pi_{\alpha_0\cdots\alpha_{nk-1}}\big\|_{L^2\to L^2}\leq
  C_k\,\hbar^{-\frac{d}{2}}\,(1+\mathcal{O}(\epsilon))^{knn_0}\,\prod_{j=0}^{nk-1}\,\sup_{\rho\in
  V_{\alpha_j}\cap \Lambda^\delta}\exp\Big(\int_0^{n_0}\big( 1/2 \log J^u -a)\circ g^t(\rho) \,dt\Big)\,,
\end{equation}
where the constant involved in $\mathcal{O}(\epsilon)$ depends only on
the manifold and on $a$. Recall $\epsilon$ is an upper bound on the diameter of the partition $\mathcal{V}$. Summing over all cylinders $\Gamma\in W^{nk}$ and using the
assumption \eqref{e:assumpress}, we obtain, for $\hbar$ small enough,
\begin{equation}\label{e:HDE1}
\sum_{\Gamma\in W^{nk}}\big\|\Pi_{\Gamma}\psi_{\hbar}\big\|\leq
  C_k(1+\mathcal{O}(\epsilon))^{knn_0}\,e^{knn_0(\beta-P_0)}\,\hbar^{-\frac{d}{2}}+\mathcal{O}(\hbar^{\infty})\,,
\end{equation}
which is the adaptation of the last
upper upper bound in~\cite[Sec.7]{NZ09} to our nonselfadjoint setting.
Lemma~\ref{r:tubesize} shows that $\Lambda_n\subset W^n$, so
the above sum can be restricted to $\Lambda_n^k$:
\begin{equation}\label{e:HDE2}
\sum_{\Gamma\in \Lambda_{n}^k}\big\|\Pi_{\Gamma}\psi_{\hbar}\big\|\leq
  C_k(1+\mathcal{O}(\epsilon))^{knn_0}\,e^{knn_0(\beta-P_0)}\,\hbar^{-\frac{d}{2}}+\mathcal{O}(\hbar^{\infty})\,,
\end{equation}

\begin{rema}\label{r:NZS} Let us say a few words on the proof of the crucial hyperbolic dispersive estimate~\eqref{e:HDE0}. First, we observe that any normalized 
state $v_{\hbar}$ microlocalized near the energy layer can be locally decomposed into Lagrangian states. Precisely, \emph{in a local chart} 
$f_l: V_l\subset M \rightarrow B(0,\epsilon)\subset\mathbb{R}^d$, one can represent (modulo $\cO_{L^2}(\hbar^{\infty})$) $v_{\hbar}$ as an integral of the form
$$
(2\pi\hbar)^{-\frac{d}{2}}\int_{B(0,2)} \tilde{v}_{\hbar,l}(\eta)e^{\frac{\imath \langle y,\eta\rangle}{\hbar}}d\eta,$$
where, for each ``momentum'' $\eta\in B(0,2)$, the function
$\tilde{v}_{\hbar,l}(\eta)$ is smooth and compactly supported in the variable $y\in
B(0,\eps)$ --- see for instance~\cite[\S 7]{NZ09}. Translating back to the manifold, it gives us a representation of 
$v_{\hbar}$ as a superposition of Lagrangian states. The prefactor $\hbar^{-\frac{d}{2}}$ in this decomposition is responsible for the appearance of 
$\hbar^{-\frac{d}{2}}$ in the upper bound~\eqref{e:HDE0}. Thus, in order to prove our estimate, it ``remains'' to find uniform upper bounds for the norms of 
$\Pi_{\alpha_0\cdots\alpha_{nk-1}}(a_{\hbar}e^{\frac{\imath S}{\hbar}})$, where $\left(a_{\hbar}e^{\frac{\imath S}{\hbar}}\right)_{\hbar\rightarrow 0}$ is a sequence 
of Lagrangian states microlocalized near $S^*M$ (given by the Fourier decomposition described above).

This uniform upper bounds can be obtained thanks to a careful WKB procedure. 
The difficulty comes from the fact that we have to deal with quantum evolution up to order $knn_0\asymp \mathcal{K}|\log\hbar|$ with $\mathcal{K}>0$ arbitrarly large. 
In particular, it could be delicate to represent the 
evolved state in a simple formula, because the involved Lagrangian leaves will spread over the manifold under the evolution. Here, the operator $\Pi_{\alpha_0\cdots\alpha_{nk-1}}$ does not only evolve 
the state up to large logarithmic times but it also cuts the phase space into small pieces, thanks to the cutoff operators $\pi_{\alpha_j}$ that we have inserted every time $n_0$ of the evolution. 
Due to this localization, it turns out that one can obtain a ``simpler'' description (through the WKB procedure) of the Lagrangian state evolved by $\Pi_{\alpha_0\cdots\alpha_{nk-1}}$. 
This can be done up to large logarithmic times provided we choose a good family of Lagrangian states. This property was first observed in~\cite{An08} and then used in several other situations~\cite{AN07, NZ09, Sch10}.

There is a natural choice of Lagrangian states which is associated to
the vertical bundle of the energy layer. These particular states were
used by Anantharaman and Nonnenmacher in a selfadjoint
setting~\cite{An08, AN07} and also by Schenck in~\cite{Sch10} in the
context of the damped wave equation. In these references, these
Lagrangian states remain under control up to large logarithmic times,
due to the global structure of the geodesic flow (it was supposed to
be Anosov). Indeed, the Anosov hypothesis implied that the associated Lagrangian submanifolds become uniformly close to the unstable foliation and that they do not develop caustics under the evolution (thanks to the absence of conjugate points) --- see~\cite[\S 4]{Sch10} for details.

Even if we consider the same equation, our situation differs from the one considered by Schenck in~\cite{Sch10},
because we do not make any global assumption on the geodesic flow: we
only assume it to be hyperbolic on $\Lambda$. Hence, we cannot
{\em a priori}  use the
same decomposition, because our dynamical assumptions do
not forbid the existence of conjugate points or caustics. Instead, we
may consider the more flexible Fourier decomposition introduced by
Nonnenmacher and Zworski in~\cite{NZ09}. The Lagrangian leaves
involved in this decomposition are transversal to the stable
manifolds, and therefore remain under control up to large logarithmic times
--- see~\cite[\S 5.1 and 7.1]{NZ09} for details. 

Thus, we use the Fourier decomposition of~\cite{NZ09} and we follow carefully their proof in order to prove the hyperbolic estimate~\eqref{e:HDE0}. The main difference with this reference is that we have to take into account the damping
function in the WKB procedure (like in~\cite{Sch10}). This implies that the term in the upper bound is in our setting of the form 
$$\prod_{j=0}^{nk-1}\,\sup_{\rho\in
  V_{\alpha_j}\cap \Lambda^\delta}\exp\Big(\int_0^{n_0}\big( 1/2 \log J^u -a)\circ g^t(\rho) \,dt\Big)$$ 
and not $\prod_{j=0}^{nk-1}\,\sup_{\rho\in
  V_{\alpha_j}\cap \Lambda^\delta}\exp\Big(\int_0^{n_0}\big( 1/2 \log J^u)\circ g^t(\rho) \,dt\Big)$ as in~\cite[\S 7]{NZ09}.

Following this strategy, we obtain
hyperbolic dispersion estimates for cylinders that always remain in a small vicinity of
the invariant hyperbolic set\footnote{In~\cite{NZ09}, the hyperbolic estimates were valid
  for cylinders in a small vicinity of the trapped set -- see section
  $7$ of this reference.} $\Lambda$, meaning the cylinders in $W^{kn}$.
\end{rema}

\begin{rema}
The constant $C_k$ and $\hbar_k$ involved in the hyperbolic dispersive estimate above can be chosen independently of the sequence $\psi_{\hbar}$.
\end{rema}

As was mentionned in remark~\ref{r:diameter}, the diameter $\eps$ of our initial cover was chosen small
enough to have the factor $(1+\mathcal{O}(\epsilon))\leq e^{\frac{P_0}{2}}$. 

As mentioned in \S\ref{pa:timescales}, we choose
$k\kappa_0>\frac{d}{n_0P_0}$, so that the factor
$\hbar^{-\frac{d}{2}} e^{-knn_0\frac{P_0}{2}}=o(1)$. Using the
assumption~\eqref{e:spectralpara} on $z(\hbar)$ and the fact that 
the time $knn_0=\cO(|\log\hbar|)$, we derive
$$
\Big\|\sum_{\Gamma\in\Lambda_n^k}\Pi_{\Gamma}\psi_{\hbar}\Big\|\leq
\sum_{\Gamma\in\Lambda_n^k}\|\Pi_{\Gamma}\psi_{\hbar}\|=
o\left(e^{knn_0\frac{\text{Im}\ z(\hbar)}{\hbar}}\right)\quad
\text{when }\hbar\to 0\,.
$$
Comparing this with the estimate \eqref{e:proofstep1}, we get
the following lower bound when $\hbar\to 0$:
\begin{equation}\label{e:proofstep2}
\Big\|\sum_{\Gamma\in(\Lambda_n^k)^c}\Pi_{\Gamma}\psi_{\hbar}\Big\|\geq
e^{knn_0\frac{\text{Im}\ z(\hbar)}{\hbar}} \big(1+o(1)\big)\,.
\end{equation}
This lower bound concerns the large logarithmic time $knn_0$, for
which the operators $\Pi_\Gamma$ or $\tPi_\Gamma$ cannot be analyzed
in terms of pseudodifferential calculus.

\subsubsection{Subadditivity property}\label{pa:submult} 
We will now show that the left hand side of~\eqref{e:proofstep2}
satisfies a kind of ``subadditive'' property\footnote{A similar
  property already appeared in the selfadjoint case treated
  in~\cite[\S2.2]{An08}.} for logarithmic times --- see Eq.~\eqref{e:proofstep4}. 
For that purpose, we decompose $(\Lambda_n^k)^c$ into
$$
(\Lambda_n^k)^c = \bigsqcup_{j=0}^{k-1} \left\{\Gamma=\Gamma^0\ldots\Gamma^j\ldots\Gamma^{k-1}:\forall i<j,\ \Gamma^i\in\overline{W}^n;\ \Gamma^j\in\Lambda_n^c;\ \forall i>j,\ \Gamma^i\in\Lambda_n\right\},
$$
and accordingly
\begin{equation}\label{e:sumsplit}
\sum_{\Gamma\in(\Lambda_n^k)^c}\Pi_{\Gamma}=
\sum_{j=0}^{k-1} 
\big(\sum_{\Gamma^{j+1},\ldots,\Gamma^{k-1}\in
  \Lambda_n} \Pi_{\Gamma^{k-1}}\cdots\cdots\Pi_{\Gamma^{j+1}}\big)
\big(\sum_{\Gamma^j\in \Lambda_n^c} \Pi_{\Gamma^{j}}\big)  
\big(\sum_{\Gamma^0,\ldots,\Gamma^{j-1}\in
  \overline{W}^n} \Pi_{\Gamma^{j-1}}\cdots\cdots\Pi_{\Gamma^{0}}\big)\,.
\end{equation}
Using this equality and property~\eqref{e:nonsapartition}, we are lead to
\begin{equation}\label{e:crudeupperbound}
\Big\|\sum_{\Gamma\in (\Lambda_n^k)^c}\Pi_{\Gamma}\,\psi_{\hbar}\Big\|\leq
\sum_{j=0}^{k-1} \Big\|\sum_{\gamma\in\Lambda_n}\Pi_{\gamma}\Big\|^{k-j-1}\,\Big\|\sum_{\gamma\in\Lambda_n^c}\Pi_{\gamma}\psi_{\hbar}\Big\|\,e^{jnn_0\frac{\text{Im}(z(\hbar))}{\hbar}}+\mathcal{O}(\hbar^{\infty})\,.
\end{equation}
We will show in section~\ref{s:largesums} (more precisely in
Eq.~\eqref{e:calderon}) that there exists a constant $c>0$, such
that for $\hbar$ small enough one has
$$
\Big\|\sum_{\gamma\in\Lambda_n}\Pi_{\gamma}\Big\| \leq c\, e^{nn_0\beta}\,.
$$
This bound uses the fact that we uniformly control the averaged
damping on cylinders of $\Lambda_n$, see lemma~\ref{r:bowenruelle};
in particular it uses the assumption~\eqref{e:ptylambda}. 

\begin{rema} In our argument below, we will crucially use the fact that the previous bound is $ce^{nn_0\beta}$ and not $ce^{n(n_0\beta+\tilde{\epsilon})}$ (even an arbitrary small $\tilde{\epsilon}>0$ is not a priori be sufficient for our proof). For that purpose, it was important to restrict ourselves to cylinders of trajectories that remain very close to the set $\Lambda$. If we have used all cylinders in $W^n$ (instead of $\Lambda_n$), we would have get a bound of order $ce^{n(n_0\beta+\tilde{\epsilon})}$ which would have not been sufficient for the end of our proof.

\end{rema}

Then, the assumption~\eqref{e:spectralpara} on $z(\hbar)$ shows that the
above right hand side is smaller than $c\, e^{nn_0\frac{\text{Im}\ z}{\hbar}}(1+o(1))$, 
therefore~\eqref{e:crudeupperbound} becomes
\begin{equation}\label{e:proofstep4}
\Big\|\sum_{\Gamma\in(\Lambda_n^k)^c}\Pi_{\Gamma}\psi_{\hbar}\Big\|\leq
c^k\,k(1+o(1))\, e^{(k-1)nn_0\frac{\text{Im}\ z}{\hbar}} \Big\|\sum_{\gamma\in\Lambda_n^c}\Pi_{\gamma}\psi_{\hbar}\Big\|+\mathcal{O}(\hbar^{\infty})\,.
\end{equation}
Combining this inequality with the lower bound~\eqref{e:proofstep2}, one obtains
\begin{equation}\label{e:proofstep5}
\Big\|\sum_{\gamma\in\Lambda_n^c}\Pi_{\gamma}\psi_{\hbar}\Big\| \geq (c^kk)^{-1}e^{nn_0\frac{\text{Im}\ z}{\hbar}}(1+o(1)) +\mathcal{O}(\hbar^{\infty}).
\end{equation}
This lower bound is our desired lower bound for a
``short'' logarithmic time. 
\begin{rema} \label{r:paradepend}
We underline again that the constants $c$ and $k$ do not depend on the sequence $(\psi_{\hbar})$, but only on $\delta$, $P_0$, $n_0$, the choice of open cover and $\kappa_0$. Thus, it depends only on $\Lambda$, $a$ and $\overline{\nu}$ and it will be this constant $(c^kk)^{-1}$ that will play the role of $c_{\Lambda, a,\overline{\nu}}$ in~\eqref{e:contradictionlim}.
\end{rema}

\subsubsection{Using semiclassical calculus}
\label{pa:semiclapprox}

Since $\psi_\hbar$ is an eigenstate of $\cU_\hbar$, the inequality~\eqref{e:proofstep5} can be rewritten as
$$
\big\|\sum_{\gamma\in\Lambda_n^c}\tPi_{\gamma}\psi_{\hbar}\big\|_{L^2(M)}
\geq (c^kk)^{-1}(1+o(1)) + \mathcal{O}(\hbar^{\infty}).$$
Using the observations of paragraph~\ref{pa:largesums}, and the
fact that $\kappa_0$ has been chosen small enough, for
$n=[\kappa_0|\log\hbar|]$ the operator
$$\tPi_{\Lambda_n^c}:=\sum_{\gamma\in\Lambda_n^c}\tPi_{\gamma}$$ is
approximately the quantization of the symbol
$\tP_{\Lambda_n^c}:=\sum_{\gamma\in\Lambda_n^c}\tP_\gamma$, which
belongs to the
symbol class $S^{-\infty,0}_{\overline{\nu}'}(T^*M)$ for some
$\overline{\nu}'\in (0,1/2)$. Using also the composition rule in $\Psi^{-\infty,0}_{\overline{\nu}'}(M)$, we get the bound
$$
\big\langle\Op_{\hbar}(\tP_{\Lambda_n^c}^2)\psi_{\hbar},\psi_{\hbar}\big\rangle
\geq (c^kk)^{-2}(1+o(1)) + \mathcal{O}(\hbar^{\nu_0}),$$
for some $\nu_0>0$. 
By construction, the function $\tP_{\Lambda_n^c}$ takes values in $[0,1]$. Because the quantization $\Op_{\hbar}$ is approximately positive for symbols in this
class --- see paragraph~\ref{p:antiwick} --- one finds that
$$ 
\big\langle\Op_{\hbar}(\tP_{\Lambda_n^c})\psi_{\hbar},\psi_{\hbar}\big\rangle
\geq (c^kk)^{-2}(1+o(1)) + \mathcal{O}(\hbar^{\nu_0}).
$$

\begin{rema} The value of $\nu_0>0$ can be different from the one appearing above: we have just kept the largest remainder term. 
\end{rema}

We now split the above left hand side into two parts, using the cutoff function $\Theta_{\hbar,\Lambda,\overline{\nu}}$. It remains to estimate
$$(A):=\big\langle\Op_{\hbar}\big(\tP_{\Lambda_n^c}\,(1-\Theta_{\hbar,\Lambda,\overline{\nu}})\big)\psi_{\hbar},\psi_{\hbar}\big\rangle,$$
and
$$(B):=\big\langle\Op_{\hbar}\big(\tP_{\Lambda_n^c}\,\Theta_{\hbar,\Lambda,\overline{\nu}}\big)\psi_{\hbar},\psi_{\hbar}\big\rangle.$$
Using again the fact that $\Op_{\hbar}$ is almost positive, and
that $\tP_{\Lambda_n^c}\leq 1$, one obtains the bound
$$(A)\leq
\big\langle\Op_{\hbar}(1-\Theta_{\hbar,\Lambda,\overline{\nu}})\psi_{\hbar},\psi_{\hbar}\big\rangle+\mathcal{O}(\hbar^{\nu_0}).$$
On the other hand, the definition of $\Lambda_n^c$ implies that $(B)=0$. This leads to 
$$\liminf_{\hbar\rightarrow0}\big\langle\Op_{\hbar}(1-\Theta_{\Lambda,\hbar,\overline{\nu}})
\psi_{\hbar},\psi_{\hbar}\big\rangle \geq (c^kk)^{-2},
$$
which concludes the proof of Theorem~\ref{t:maintheo1}. The lower bound depends only on $\Lambda$, $a$ and $\overline{\nu}$ --- see remark~\ref{r:paradepend}.

\section{Long products of pseudodifferential operators}\label{s:largesums}

In this section, we describe some properties of long products of pseudodifferential operators evolved under the quantum propagator. For that purpose, we recall first a few facts on the Egorov property for nonselfadjoint operators and then we apply them to our problem.

\subsection{Egorov property for long times}\label{pa:egorov}

In this paragraph, we recall an Egorov property for times of order
$\kappa_0|\log\hbar|$, where $\kappa_0$ is a small enough constant
that we will not try to optimize. Consider $q_1$ and $q_2$ two symbols
belonging to $S^{0,0}(T^*M)$ (for the sake of simplicity, we also
assume that these symbols depend smoothly on $\hbar\in (0,1]$). In
this article, we will use the symbols $q_i$ equal to
$\sqrt{2z(\hbar)}a$, $-\sqrt{2\bar{z}(\hbar)}a$ or $0$ --- see paragraph~\ref{pa:largesums} below.

\subsubsection{The case of fixed times}

We consider a smooth function $b$ on $T^*M$ which is compactly supported in a neighborhood of $S^*M$, say 
$\text{supp}(b)\subset\{(x,\xi):\|\xi\|^2\in[1/2,3/2]\}$ and which belongs to $S^{-\infty,0}(T^*M)$. The following operator is a pseudodifferential operator, for every $t\in\mathbb{R}$,
$$B(t,b)=\left(e^{-\frac{\imath t}{\hbar}\left(-\frac{\hbar^2\Delta}{2}-\imath\hbar\Op_{\hbar}(q_1)\right)}\right)^*\Op_{\hbar}(b)
e^{-\frac{\imath t}{\hbar}\left(-\frac{\hbar^2\Delta}{2}-\imath\hbar\Op_{\hbar}(q_2)\right)}.$$
We briefly recall how such a fact can be proved by a direct adaptation of the arguments used in the selfadjoint case~\cite{DS, EZ, BoRo02, Roy10}. Take $q=\overline{q_1}+q_2$, and 
introduce, for $t,s\in\mathbb{R}$, the symbol
$$B_t(s):=b\circ g^{t-s}\exp\left(-\int_0^{t-s}q\circ g^{\tau}d\tau\right).$$
To alleviate our notations, we call
$$\U_{\hbar}^s(q_i):=e^{-\frac{\imath
    s}{\hbar}\left(-\frac{\hbar^2\Delta}{2}-\imath\hbar\Op_{\hbar}(q_i)\right)},\quad
i=1,2\,,
$$
so that the operator $B(t,b) = \left(\U_{\hbar}^t(q_1)\right)^*\Op_{\hbar}(b)\,\U_{\hbar}^t(q_2)$.
Fixing $t$, we then introduce the auxiliary operators
$$R(\hbar,s)=\left(\U_{\hbar}^s(q_1)\right)^*\Op_{\hbar}(B_t(s))\,\U_{\hbar}^s(q_2).$$
Like in the classical proof of the Egorov Theorem (i.e. in the selfadjoint case), one can compute the derivative of $R(\hbar,s)$:
$$\frac{d}{ds}\left(R(\hbar,s)\right)=
(\mathcal{U}^s_{\hbar}(q_1))^*\left(\frac{\imath}{\hbar}\left[-\frac{\hbar^2\Delta}{2},\Op_{\hbar}(B_t(s))\right]-\Op_{\hbar}(q_1)^*\Op_{\hbar}(B_t(s))-
\Op_{\hbar}(B_t(s))\Op_{\hbar}(q_2)\right)\mathcal{U}^s_{\hbar}(q_2)$$
$$\hspace{4cm}-(\mathcal{U}^s_{\hbar}(q_1))^*\big(\Op_{\hbar}\left(\{p_0,B_t(s)\}\right)-\Op_{\hbar}(B_t(s)(\overline{q_1}+q_2))\big)\mathcal{U}^s_{\hbar}(q_2).$$
We integrate this equality between $0$ and $t$~\cite{BoRo02}:
$$\left(\U_{\hbar}^t(q_1)\right)^*\Op_{\hbar}(b)\,\U_{\hbar}^t(q_2)=\Op_{\hbar}\left(b\circ g^{t}e^{-\int_0^{t}q\circ g^{\tau}d\tau}\right)+\int_0^t(\mathcal{U}^s_{\hbar}(q_1))^*\tilde{R}(\hbar,s)\mathcal{U}^s_{\hbar}(q_2)ds,$$
where $\tilde{R}(\hbar,s)$ is a pseudodifferential operator in $\Psi^{-\infty,-1}(M)$ thanks to pseudodifferential rules. Proceeding by induction and using pseudodifferential calculus perfomed locally on each chart~\cite{DS, EZ} (respectively Chapter $7$ and $4$) and the fact 
that $\mathcal{U}^s_{\hbar}(q_2)$ is a bounded operator (with a norm depending\footnote{It is in fact bounded by a constant of order $e^{|s|\|q_2\|_{\infty}}$.} on $q_2$ and $s$), one in fact finds that 
$\left(\U_{\hbar}^t(q_1)\right)^*\Op_{\hbar}(b)\U_{\hbar}^t(q_2)$ is a pseudodifferential operator in $\Psi^{-\infty,0}(M)$,
\begin{equation}\label{e:Egorov1}
\left(\U_{\hbar}^t(q_1)\right)^*\Op_{\hbar}(b)\U_{\hbar}^t(q_2)=\Op_{\hbar}(\tilde{b}(t))+\mathcal{O}(\hbar^{\infty}),
\end{equation}
where $\tilde{b}(t)\sim\sum_{j\geq 0}\hbar^{j} b_j(t)$, 
$$b_0(t)=B_t(0)=b\circ g^{t}\exp\left(-\int_0^{t}(\overline{q_1}+q_2)\circ g^{\tau}d\tau\right),$$ 
and all the higher order terms $(b_j(t))_{j\geq1}$ in the asymptotic 
expansion depend on $b$, $t$, $q_1$, $q_2$ and the choice of
coordinates on the manifold. Moreover, for a fixed $t\in\mathbb{R}$,
one can verify that every term $b_j(t)$ is supported in
$g^{-t}\text{supp}(b)$. Each $b_j(t)$ can be written as
$c_j(t)\exp\left(-\int_0^{t}(\overline{q_1}+q_2)\circ
  g^{\tau}d\tau\right)$, where $c_j(t)\in S^{-\infty,0}(T^*M)$. The
Calder\'on-Vaillancourt Theorem~\cite[Chap.5]{EZ} tells us that there exist constants $C_{b,t}$ and $C_{b,t}'$ (depending on $b$, $q_1$, $q_2$, $t$ and $M$) such that
$$
 \left\|\left(\U_{\hbar}^t(q_1)\right)^*\Op_{\hbar}(b)\,\U_{\hbar}^t(q_2)\right\|_{L^2(M)\rightarrow L^2(M)}\leq C_{b,t}\|b_0(t)\|_{\infty},$$
and also
\begin{equation}\label{e:generalegorov}
 \left\|\left(\U_{\hbar}^t(q_1)\right)^*\Op_{\hbar}(b)\U_{\hbar}^t(q_2)-\Op_{\hbar}(b_0(t))\right\|_{L^2(M)\rightarrow
   L^2(M)}\leq C_{b,t}'\hbar.
\end{equation}

\subsubsection{The case of logarithmic times}

All the above discussion was done for a fixed $t\in\mathbb{R}$. In this article, we needed to apply Egorov property for long range of times of order $\kappa_0|\log\hbar|$~\cite{BoRo02, AN07}. This can be achieved as all the arguments above can be adapted if we use more general classes of symbols, i.e. $S_{\overline{\nu}}^{-\infty,0}(T^*M)$ where $\overline{\nu}<1/2$ is a fixed constant\footnote{In order to avoid too many indices, we take the same $\overline{\nu}$ as in the definition of $\Theta_{\Lambda,\hbar,\overline{\nu}}$.}.

In particular, one can show that, for $b\in S^{-\infty,0}(T^*M)$
supported near $S^*M$ as above and $\kappa_1$ small enough (depending
on the support of  $b$, on $\overline{\nu}$, on $q_1$ and on $q_2$),
the operator $B(t,b)$ is a pseudodifferential operator in
$\Psi^{-\infty,0}_{\overline{\nu}}(M)$ for all
$|t|\leq\kappa_1|\log\hbar|$. Precisely, its symbol has an asymptotic expansion of the same form as in the case of fixed times, except that for every $j\geq 0$ the symbol $c_j(t)$ belongs to $S^{-\infty,k_j}_{\overline{\nu}}(T^*M)$ for every $|t|\leq\kappa_1|\log\hbar|$, where $j-k_j$ is an increasing sequence of real numbers converging to infinity as $j\rightarrow+\infty$. 

We also mention that all the seminorms of the symbols $c_j(t)$ can be bounded uniformly for $|t|\leq\kappa_1|\log\hbar|$.  Finally, using pseudodifferential calculus (performed locally on every chart), one can verify that the following uniform estimates hold:

\begin{prop}\label{p:egorov} There exist constants $\kappa_1>0$ and $\nu_0>0$ (depending only on $q_1$, $q_2$, $\overline{\nu}$ and $M$) such that for every smooth function $b$ compactly supported in $\{(x,\xi):\|\xi\|^2\in[1/2,3/2]\}$, there exists a constant $C_b>0$ such that 
for every $ |t|\leq \kappa_1|\log\hbar|$, one has
$$ \Big\|\left(\U_{\hbar}^t(q_1)\right)^* \Oph(b)\,\U_{\hbar}^t(q_2)\Big\|_{L^2(M)\rightarrow L^2(M)}\leq C_{b}\|b_0(t)\|_{\infty},$$
and
$$ \left\|\left(\U_{\hbar}^t(q_1)\right)^* \Oph(b)\,\U_{\hbar}^t(q_2)-\Op_{\hbar}(b_0(t))\right\|_{L^2(M)\rightarrow L^2(M)}\leq C_{b}\hbar^{\nu_0}.$$
\end{prop}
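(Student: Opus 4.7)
The plan is to iterate the fixed-time Duhamel computation already written out in the excerpt, while working systematically in the anisotropic symbol class $S^{-\infty,0}_{\overline{\nu}}(T^*M)$. The key analytic input is that the differential $d g^t$ grows at most like $e^{\Lambda |t|}$ for some $\Lambda>0$ depending only on $M$, so for any smooth compactly supported symbol $b$ and any multi-index $\alpha$ one has $\|\partial^\alpha(b\circ g^t)\|_\infty \leq C_\alpha\,e^{\Lambda|\alpha||t|}$. Choosing $\kappa_1>0$ so small that $\kappa_1\Lambda\leq\overline{\nu}$ forces $b\circ g^t\in S^{-\infty,0}_{\overline{\nu}}(T^*M)$ with seminorms uniformly bounded for $|t|\leq\kappa_1|\log\hbar|$. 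The amplitude $\exp\bigl(-\int_0^t(\overline{q_1}+q_2)\circ g^\tau d\tau\bigr)$ has only bounded logarithmic derivatives in $(x,\xi)$ (its derivatives are sums of similar integrals multiplied by the same exponential factor), so $b_0(t)=(b\circ g^t)\,\exp\bigl(-\int_0^t(\overline{q_1}+q_2)\circ g^\tau d\tau\bigr)$ factors cleanly: its $S^{-\infty,0}_{\overline{\nu}}$-seminorms are uniformly bounded by $C\,\|\exp(-\int_0^t(\overline{q_1}+q_2)\circ g^\tau d\tau)\|_\infty$, which is comparable to $\|b_0(t)\|_\infty/\|b\|_\infty$ on the support of $b\circ g^t$.

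Next, I would iterate the Heisenberg-type calculation from the excerpt $N$ times to produce the expansion
\begin{equation*}
\left(\U_{\hbar}^t(q_1)\right)^* \Op_{\hbar}(b)\,\U_{\hbar}^t(q_2)=\sum_{j=0}^{N-1}\hbar^j\Op_{\hbar}(b_j(t)) + \hbar^N R_N(t,\hbar),
\end{equation*}
with $b_j(t)=c_j(t)\exp\bigl(-\int_0^t(\overline{q_1}+q_2)\circ g^\tau d\tau\bigr)$ and $c_j(t)\in S^{-\infty,k_j}_{\overline{\nu}}(T^*M)$ satisfying $j-k_j\to+\infty$. Each iteration of the Duhamel identity costs one composition in the anisotropic pseudodifferential calculus, which loses $\hbar^{-2\overline{\nu}}$ per order of the Moyal expansion but gains the explicit $\hbar$ factor from that expansion, i.e.\ a net gain of $\hbar^{1-2\overline{\nu}}>0$ per step. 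The remainder $R_N(t,\hbar)$ is estimated by using the crude bound $\|\U_{\hbar}^s(q_i)\|_{L^2\to L^2}\leq e^{|s|\|q_i\|_\infty}=\hbar^{-\kappa_1\|q_i\|_\infty}$ and integrating over $s\in[0,t]$ (picking up an extra factor $|t|=O(|\log\hbar|)$). Choosing $\kappa_1$ small enough so that the total loss $\hbar^{-\kappa_1(\|q_1\|_\infty+\|q_2\|_\infty)}$ and the logarithmic factors are dominated by the gain $\hbar^{N(1-2\overline{\nu})}$, then picking $N$ large, yields the second bound with $\nu_0$ any number strictly smaller than $1-2\overline{\nu}$.

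For the first assertion, use the expansion above with $N=1$: up to $O(\hbar^{\nu_0})$, the operator equals $\Op_{\hbar}(b_0(t))$. The Calder\'on--Vaillancourt theorem in $S^{-\infty,0}_{\overline{\nu}}(T^*M)$ bounds its $L^2\to L^2$ norm by a finite sum of seminorms of $b_0(t)$; by the factorization of the first paragraph these seminorms are all bounded by a constant depending only on $b$, $q_1$, $q_2$, $\overline{\nu}$, $M$, times $\|\exp(-\int_0^t(\overline{q_1}+q_2)\circ g^\tau d\tau)\|_\infty\leq C_b\|b_0(t)\|_\infty$. Absorbing the $O(\hbar^{\nu_0})$ error into $C_b\|b_0(t)\|_\infty$ for $\hbar$ small enough closes this estimate.

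The main obstacle is the tension between the three sources of loss: differentiation of $b\circ g^t$ (costing a power $\hbar^{-\kappa_1\Lambda|\alpha|}$), the anisotropic composition rule (costing $\hbar^{-2\overline{\nu}}$ per Moyal step), and the $L^2$ bound $\|\U_{\hbar}^s(q_i)\|\leq\hbar^{-\kappa_1\|q_i\|_\infty}$. All three must be made small enough so that each Duhamel iteration yields a \emph{positive} power of $\hbar$, which forces $\kappa_1$ to be chosen genuinely small as a function of $\overline{\nu}$, $q_1$, $q_2$ and the maximal expansion rate $\Lambda$ of the geodesic flow on $M$. The other slightly delicate point is that the sup-norm bound $C_b\|b_0(t)\|_\infty$ on the full operator (rather than a seminorm bound) is only possible because every symbol $b_j(t)$ in the expansion factors out the \emph{same} scalar amplitude $\exp(-\int_0^t(\overline{q_1}+q_2)\circ g^\tau d\tau)$, so that the exponential amplification can be pulled out of all the Calder\'on--Vaillancourt seminorms at once.
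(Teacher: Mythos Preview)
Your proposal is correct and follows essentially the same route as the paper. The paper does not give a detailed proof of this proposition: it presents the fixed-time Duhamel computation in \S4.1.1, then in \S4.1.2 simply states that the argument carries over to times $|t|\leq\kappa_1|\log\hbar|$ once one works in the anisotropic classes $S^{-\infty,0}_{\overline{\nu}}(T^*M)$, with the expansion terms taking the form $b_j(t)=c_j(t)\exp\bigl(-\int_0^t(\overline{q_1}+q_2)\circ g^\tau d\tau\bigr)$ and $c_j(t)\in S^{-\infty,k_j}_{\overline{\nu}}$ with $j-k_j\to\infty$. Your write-up is a faithful fleshing-out of exactly this sketch, including the choice of $\kappa_1$ small enough to control the three competing losses.

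Two minor points of phrasing to tighten. First, the ``logarithmic derivatives'' of the amplitude $\exp(-\int_0^t q\circ g^\tau d\tau)$ are not literally bounded: $\partial^\alpha\int_0^t q\circ g^\tau d\tau$ grows like $e^{\Lambda|\alpha||t|}\leq\hbar^{-\overline{\nu}|\alpha|}$, which is what you actually need for membership in $S^{-\infty,0}_{\overline{\nu}}$. Second, the absorption of the $O(\hbar^{\nu_0})$ remainder into $C_b\|b_0(t)\|_\infty$ in the last step requires $\|b_0(t)\|_\infty\gtrsim\hbar^{\nu_0}$; this holds because $\|b_0(t)\|_\infty\geq\|b\|_\infty\,e^{-|t|\|\Re(\overline{q_1}+q_2)\|_\infty}\geq\|b\|_\infty\,\hbar^{\kappa_1\|\Re q\|_\infty}$, and one simply takes $\kappa_1$ small enough that $\kappa_1\|\Re q\|_\infty<\nu_0$, consistent with your stated dependence of $\kappa_1$ on $q_1,q_2$.
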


\begin{rema}\label{r:U*bU}
We will mostly use evolutions involving the propagator $\cU_\hbar^t$ of
\eqref{e:propag}. Then, the expression 
$(\U_\hbar^t)^*\Op_\hbar(b) \,\U_\hbar^t$ has the form of
\eqref{e:Egorov1}, with $q_1 = q_2 = \sqrt{2z(\hbar)}a$. As a result,
in this case the principal symbol is $b_0(t)=b\circ
g^t\,e^{-2\int_0^ta\circ g^\tau\,d\tau}$.

Another operator will be used: $(\U_\hbar^t)^{-1}\Op_\hbar(b)
\,\U_\hbar^t$ also has the form \eqref{e:Egorov1}, now with
$q_1=-\sqrt{2\bar z}a$, $q_2=\sqrt{2z(\hbar)}a$. In this case, the
principal symbol $b_0(t)=b\circ g^t$.
\end{rema}
\subsection{Sums of long products of pseudodifferential operators}\label{pa:largesums}

In this paragraph, we make a few observations on ``long'' product of 
 pseudodifferential operators (with $\asymp |\log\hbar|$ factors), that we used at different stages of our proof -- e.g. in paragraphs~\ref{pa:submult} and~\ref{pa:semiclapprox}.

The open cover and the time $n_0$ of paragraph~\ref{pa:toppress} (and their corresponding quantum partition near $\mathcal{E}^{\delta}$) are fixed in this paragraph.

We would like to use the above results to show that, for $\kappa_0>0$
small enough, for $0\leq p\leq\kappa_0|\log\hbar|$ and for any subset
$X_p\subset\overline{W}^p$ of $p$-cylinders, the operator
$$\tPi_{X_p}:=\sum_{\gamma\in X_p}\tPi_{\gamma}$$
is a pseudodifferential operator, with a principal symbol in a
``good'' symbol class. Using the composition rule for pseudodifferential operators in $\Psi^{-\infty,0}_{\overline{\nu}}(M)$ and proposition~\ref{p:egorov}, there exist $\nu_0>0$ and $\kappa_0>0$ such that, for every $0\leq p\leq\kappa_0|\log\hbar|$ and for every $\gamma\in\overline{W}^p$,
$$\left\|\tPi_{\gamma} - \Op_{\hbar}(\tP_{\gamma})\right\|_{L^2(M)}=\mathcal{O}(\hbar^{\nu_0}),$$
where the remainder can be bounded uniformly for every $0\leq p\leq\kappa_0|\log\hbar|$ and for every cylinder $\gamma\in\overline{W}^p$. 

\begin{rema}
The constants $\nu_0$ and $\kappa_0$ appearing here are a priori smaller than the one from proposition~\ref{p:egorov}.
\end{rema}

This observation leads us to the bound
\begin{equation}\label{e:tPi-tP}
\big\|\tPi_{X_p}-\Oph(\tP_{X_p})\big\|_{L^2(M)}=\mathcal{O}(K^p\hbar^{\nu_0}),\qquad \tP_{X_p}:=\sum_{\gamma\in
  X_p}\tP_{\gamma}
\end{equation}
where $K=|\overline{W}|$. 
Hence, for $\kappa_0$ small enough, the remainder is of the form
$\mathcal{O}(\hbar^{\nu_0'})$ for some positive $\nu_0'>0$. We
underline that the constant in the remainder is uniform w.r.to $0\leq
p\leq\kappa_0|\log\hbar|$ and
$X_p\subset \overline{W}^p$.

We can also verify that there exists $\kappa_0>0$ small enough and
$\overline{\nu}<\overline{\nu'}<1/2$ such that the function
$\tP_{X_p}$ belongs to the symbol class
$S^{-\infty,0}_{\overline{\nu}'}(T^*M)$, and such that the seminorms
(defining this class) can be bounded uniformly w.r.to
$0\leq p\leq\kappa_0|\log\hbar|$ and $X_p\subset\overline{W}^p$.
In particular, one can apply semiclassical calculus to this operator. For instance, the Calder\'on-Vailancourt Theorem tells us that
\begin{equation}\label{e:calderonbis}\big\|\Op_{\hbar}(\tPi_{X_p})\big\|_{L^2\to L^2}= \mathcal{O}(1),\end{equation}
where the constant in the remainder is uniform w.r.to $0\leq p\leq\kappa_0|\log\hbar|$ and $X_p\subset\overline{W}^p$.

\begin{rema}\label{r:productpositime}
When proving the subadditive property, we also needed to bound
from above the norm of
$$\mathbf{Q}_{X_p}:=e^{-\frac{\imath
    pn_0\hbar\Delta}{2}}\sum_{\gamma\in X}\Pi_{\gamma}
\,,\quad\text{for a subset }X_p\subset \Lambda_p\,.
$$
Using the notations of \S\ref{pa:egorov}, this operator can be written
$$
\mathbf{Q}_{X_p} = \U_\hbar(0)^{-pn_0}\,\tPi_{X_p}\,\U_\hbar(\sqrt{2z}a)^{pn_0}
$$
Hence, using \eqref{e:tPi-tP} and the Egorov type estimate of
Proposition~\ref{p:egorov}, one obtains, for $\kappa_0$ small enough,
$$\left\|\mathbf{Q}_{X_p}-\Op_{\hbar}\big(\tP_{X_p}\circ g^{pn_0}\,e^{-\int_0^{pn_0}a\circ g^s ds}\big)\right\|_{L^2(M)}=\mathcal{O}(\hbar^{\nu_0'}),$$
for some $\nu_0'>0$. The symbol
$$\tP_{X_p}\circ g^{pn_0}\,e^{-\int_0^{pn_0}a\circ g^s ds} = 
e^{-\int_0^{pn_0}a\circ g^s ds}\sum_{\gamma\in X_p} P_{\gamma^{p-1}}\circ g^{(p-1)n_0}\ldots P_{\gamma^1}\circ g^{n_0}  P_{\gamma^0}$$
belongs to a class $S^{-\infty,0}_{\overline{\nu}'}(T^*M)$. In
particular, since $X_p\subset \Lambda_p$, one can combine lemma~\ref{r:bowenruelle} with the
Calder\'on-Vaillancourt Theorem in order to derive that, for
$\kappa_0$ small enough and for any $0\leq p\leq\kappa_0|\log\hbar|$,
one has the norm estimate
\begin{equation}\label{e:calderon}\|\mathbf{Q}_{X_p}\|_{L^2}=\mathcal{O}(e^{pn_0\beta}),
\end{equation}
where the implied constant is uniform in $p$, $X_p\subset \Lambda_p$ and depends on $a$, on the choice of the open cover and on $n_0$.
\end{rema}

\begin{rema} 
Even if we did not mention it at every stage of the proof, the remainders due to the semiclassical approximation depend on the choice of the open cover and on $n_0$ that were introduced in paragraph~\ref{pa:toppress}.
\end{rema}

\section{Pseudodifferential calculus on a manifold}

\label{a:pdo}
In this last section, we review some basic facts on semiclassical analysis that can be found for instance in~\cite{DS, EZ}.

\subsection{General facts}

Recall that we define on $\mathbb{R}^{2d}$ the following class of symbols:
$$S^{m,k}(\mathbb{R}^{2d}):=\left\{(b_{\hbar}(x,\xi))_{\hbar\in(0,1]}\in C^{\infty}(\mathbb{R}^{2d}):|\partial^{\alpha}_x\partial^{\beta}_{\xi}b_{\hbar}|
\leq C_{\alpha,\beta}\hbar^{-k}\langle\xi\rangle^{m-|\beta|}\right\}.$$
Let $M$ be a smooth Riemannian $d$-manifold without boundary. Consider a smooth atlas $(f_l,V_l)$ of $M$, where each $f_l$ is a smooth diffeomorphism from 
$V_l\subset M$ to a bounded open set $W_l\subset\mathbb{R}^{d}$. To each $f_l$ correspond a pull back $f_l^*:C^{\infty}(W_l)\rightarrow C^{\infty}(V_l)$ and a canonical 
map $\tilde{f}_l$ from $T^*V_l$ to $T^*W_l$:
$$\tilde{f}_l:(x,\xi)\mapsto\left(f_l(x),(Df_l(x)^{-1})^T\xi\right).$$
Consider now a smooth locally finite partition of identity $(\phi_l)$ adapted to the previous atlas $(f_l,V_l)$. 
That means $\sum_l\phi_l=1$ and $\phi_l\in C^{\infty}(V_l)$. Then, any observable $b$ in $C^{\infty}(T^*M)$ can be decomposed as follows: $b=\sum_l b_l$, where 
$b_l=b\phi_l$. Each $b_l$ belongs to $C^{\infty}(T^*V_l)$ and can be pushed to a function $\tilde{b}_l=(\tilde{f}_l^{-1})^*b_l\in C^{\infty}(T^*W_l)$. 
As in~\cite{DS, EZ}, define the class of symbols of order $m$ and index $k$
\begin{equation}
\label{defpdo}S^{m,k}(T^{*}M):=\left\{(b_{\hbar}(x,\xi))_{\hbar\in(0,1]}\in C^{\infty}(T^*M):|\partial^{\alpha}_x\partial^{\beta}_{\xi}b_{\hbar}|\leq C_{\alpha,\beta}\hbar^{-k}\langle\xi\rangle^{m-|\beta|}\right\}.
\end{equation}
Then, for $b\in S^{m,k}(T^{*}M)$ and for each $l$, one can associate to the symbol $\tilde{b}_l\in S^{m,k}(\mathbb{R}^{2d})$ the standard Weyl quantization
$$\Op_{\hbar}^{w}(\tilde{b}_l)u(x):=
\frac{1}{(2\pi\hbar)^d}\int_{R^{2d}}e^{\frac{\imath}{\hbar}\langle x-y,\xi\rangle}\tilde{b}_l\left(\frac{x+y}{2},\xi;\hbar\right)u(y)dyd\xi,$$
where $u\in\mathcal{S}(\mathbb{R}^d)$, the Schwartz class. Consider now a smooth cutoff $\psi_l\in C_c^{\infty}(V_l)$ such that $\psi_l=1$ close to the support of $\phi_l$. 
A quantization of $b\in S^{m,k}(T^*M)$ is then defined in the following way (see chapter $14$ in~\cite{EZ}):
\begin{equation}
\label{pdomanifold}\Op_{\hbar}(b)(u):=\sum_l \psi_l\times\left(f_l^*\Op_{\hbar}^w(\tilde{b}_l)(f_l^{-1})^*\right)\left(\psi_l\times u\right),
\end{equation}
where $u\in C^{\infty}(M)$. This quantization procedure $\Op_{\hbar}$ sends (modulo $\mathcal{O}(\hbar^{\infty})$) $S^{m,k}(T^{*}M)$ onto the space of pseudodifferential 
operators of order $m$ and of index $k$, denoted $\Psi^{m,k}(M)$~\cite{DS, EZ}. It can be shown that the dependence in the cutoffs $\phi_l$ and $\psi_l$ only appears at order 
$1$ in $\hbar$ (Theorem $18.1.17$ in~\cite{Ho} or Theorem $9.10$ in~\cite{EZ}) and the principal symbol map $\sigma_0:\Psi^{m,k}(M)\rightarrow S^{m-1,k}/S^{m-1,k-1}(T^{*}M)$ is then 
intrinsically defined. Most of the rules (for example the composition of operators, the Egorov and Calder\'on-Vaillancourt Theorems) that hold on 
$\mathbb{R}^{2d}$ still hold in the case of $\Psi^{m,k}(M)$. Because our study concerns the behavior of quantum evolution for logarithmic times in $\hbar$, a larger class of 
symbols should be introduced as in~\cite{DS, EZ}, for $0\leq\overline{\nu}<1/2$,
\begin{equation}\label{symbol}
S^{m,k}_{\overline{\nu}}(T^{*}M):=\left\{(b_{\hbar})_{\hbar\in(0,1]}\in C^{\infty}(T^*M):
|\partial^{\alpha}_x\partial^{\beta}_{\xi}b_{\hbar}|\leq C_{\alpha,\beta}\hbar^{-k-\overline{\nu}|\alpha+\beta|}\langle\xi\rangle^{m-|\beta|}\right\}.
\end{equation}
Results of~\cite{DS, EZ} can be applied to this new class of symbols. For example, a symbol of $S^{0,0}_{\overline{\nu}}(T^*M)$ gives a bounded operator on $L^2(M)$ 
(with norm uniformly bounded with respect to $\hbar$).

\subsection{Positive quantization}\label{p:antiwick}

Even if the Weyl procedure is a natural choice to quantize an observable $b$ on $\mathbb{R}^{2d}$, it is sometimes preferrable to use a quantization procedure $\Op_{\hbar}$ that satisfies the property~: $\Op_{\hbar}(b)\geq 0$ if $b\geq0$. This can be achieved thanks to the anti-Wick procedure $\Op_{\hbar}^{AW}$, see~\cite{HeMaRo}.
 For $b$ in $S^{0,0}_{\overline{\nu}}(\mathbb{R}^{2d})$, that coincides with a function on $\mathbb{R}^d$ outside a compact subset of $T^*\mathbb{R}^d=\mathbb{R}^{2d}$, one has
\begin{equation}\label{equivalence-positive-quantization}\|\Op_{\hbar}^w(b)-\Op_{\hbar}^{AW}(b)\|_{L^2}\leq C\sum_{|\alpha|\leq D}\hbar^{\frac{|\alpha|+1}{2}}\|\partial^{\alpha}db\|,
\end{equation}
where $C$ and $D$ are some positive constants that depend only on the dimension $d$.
To get a positive procedure of quantization on a manifold, one can replace the Weyl quantization by the anti-Wick one in definition~(\ref{pdomanifold}). This new choice of quantization is well defined for every element in $S^{0,0}_{\overline{\nu}}(T^*M)$ of the form $c_0(x)+c(x,\xi)$ where $c_0$ belongs to $S^{0,0}_{\overline{\nu}}(T^*M)$ and $c$ belongs to $\mathcal{C}^{\infty}_o(T^*M)\cap S^{0,0}_{\overline{\nu}}(T^*M)$.

\appendix

\section{Inverse logarithmic ``spectral gap'' under a pressure condition\\
By St\'ephane Nonnenmacher and Gabriel Rivi\`ere}
\label{a:loggap}

In this appendix, we consider the problem~\eqref{e:specprob} in the case
where the damping function $a(x)\geq 0$ does not identically
vanish. We also make the assumption that
 the {\it set of undamped trajectories} 
$$
\cN=\left\{\rho\in
  S^*M:a\circ g^t(\rho)=0,\ t\in\mathbb{R}\right\}
$$
is not empty. In this case, it is generally not known whether there
exists a strip of fixed width below the real axis without eigenvalues
of~\eqref{e:specprob}. Lebeau showed~\cite{Leb93} the existence of an
exponentially thin strip, meaning that there exists $C>0$ such that all
eigenvalues $\tau\neq 0$ satisfy
$$
\text{Im}\ \tau\leq - \frac{1}{C}e^{-C|\tau|}\,.
$$
Lebeau also constructed a geometric situation where this upper bound is sharp. Yet, it is natural to ask whether additional assumptions on the manifold $M$ and on the set $\cN$ allow to improve this upper bound. In this
appendix, we apply the techniques developed above to prove the following criterium for an inverse logarithmic gap.

\begin{theo}\label{th:log-gap} Assume the set of undamped trajectories $\cN$ is a
  hyperbolic set, and satisfies the pressure condition
\begin{equation}\label{e:negpress}
P_{top}\left(\cN,g^t,\frac{1}{2}\log J^u\right) < 0.
\end{equation}
Then, there exists a constant $C>0$ such that for the following
resolvent estimate holds:
\begin{equation}\label{e:resolv-est}
\|(-\Delta-2ia\tau-\tau^2)^{-1}\|\leq
\frac{C(\log(\Re\tau))^2}{\Re\tau},\quad \text{uniformly for }\tau\in \Big\{\Re \tau\geq C,\
|\Im \tau|\leq \frac{C^{-1}}{\log (\Re\tau)}\Big\}\,.
\end{equation}
As a consequence, there is a $\tilde{C}>0$ such that any eigenvalue $\tau_n\neq 0$ of the
problem~\eqref{e:specprob} satisfies
\begin{equation}\label{e:gap}
\Im\ \tau_n\leq - \frac{\tilde C}{\log(1+|\tau_n|)}\,.
\end{equation}
\end{theo}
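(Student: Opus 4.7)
The plan is to prove \eqref{e:resolv-est} by contradiction; \eqref{e:gap} is then an immediate consequence, since any eigenvalue of \eqref{e:specprob} would be a pole of the resolvent and could not live in the logarithmic strip where \eqref{e:resolv-est} holds. Setting $\tau=\sqrt{2z}/\hbar$ as in \S1.1, the estimate \eqref{e:resolv-est} translates to the operator norm bound $\|(\cP(\hbar,z)-z)^{-1}\|=O(|\log\hbar|/\hbar)$ on the strip $\{|z-\tfrac12|=O(\hbar),\,|\Im z|\leq c\hbar/|\log\hbar|\}$ for sufficiently small $c>0$. Failure produces a sequence $\hbar_n\to 0$, parameters $z_n$ in the strip, and normalized quasimodes $\psi_n\in L^2(M)$ with $\|(\cP(\hbar_n,z_n)-z_n)\psi_n\|=o(\hbar_n/|\log\hbar_n|)$.

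\textbf{Semiclassical measure and invocation of Theorem~\ref{t:maintheo1}.} Up to subsequence, $\mu_{\psi_n}$ converges to a semiclassical probability measure $\mu$ on $S^*M$. A Duhamel expansion using the quasimode equation and $|\Im z_n|/\hbar_n=o(1)$ yields the propagation invariance \eqref{e:propagationscmeas} with $\beta=0$; taking $b\equiv 1$ and using $a\geq 0$ then forces $a\circ g^s=0$ $\mu$-a.e., so $\supp\mu\subset\cN$. The set $\cN$ is compact, hyperbolic, satisfies \eqref{e:negpress}, and condition \eqref{e:ptylambda} with $\beta=0$ is trivial because $a\equiv 0$ on $\cN$. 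An inspection of the proof of Theorem~\ref{t:maintheo1} shows it extends to our setting: the exact eigenmode equation is used only through $\cU_\hbar^{nn_0}\psi_\hbar=e^{-inn_0 z/\hbar}\psi_\hbar$ in \eqref{e:proofstep0}, which for a quasimode with residual $o(\hbar/|\log\hbar|)$ introduces only an $o(1)$ error after a Duhamel estimate over the logarithmic time $nn_0=O(|\log\hbar|)$; the weaker spectral bound $\Im z_n/\hbar_n\geq -c/|\log\hbar_n|$ merely multiplies the right-hand side of \eqref{e:proofstep2} by a bounded factor, preserving a strictly $<1$ constant in \eqref{e:proofstep5}. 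Fixing a small $\overline{\nu}\in(0,1/2)$ and a cutoff $\Theta_{\cN,\hbar,\overline{\nu}}$, one obtains
\begin{equation}\label{e:app-mainbd}
\limsup_n\,\bigl\langle \Op_{\hbar_n}(\Theta_{\cN,\hbar_n,\overline{\nu}})\psi_n,\psi_n\bigr\rangle\leq c_{\cN,a,\overline{\nu}}<1,
\end{equation}
so $\psi_n$ carries mass $\geq 1-c_{\cN,a,\overline{\nu}}>0$ at distance $\geq\hbar_n^{\overline{\nu}}/2$ from $\cN$.

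\textbf{Contradiction via integrated damping.} The semiclassical energy identity $\Im\langle\cP(\hbar,z)\psi,\psi\rangle=-\hbar\Re\sqrt{2z}\langle a\psi,\psi\rangle$ combined with the quasimode equation gives $\langle a\psi_n,\psi_n\rangle=O(1/|\log\hbar_n|)$, and the same bound applies to $\cU_{\hbar_n}^s\psi_n$ (still a quasimode with similar error) uniformly in $s\leq\kappa_0|\log\hbar_n|$. Integrating in $s$ and using the nonselfadjoint Egorov theorem of \S\ref{pa:egorov}, one gets $\bigl\langle\Op_{\hbar_n}(F_{\kappa_0|\log\hbar_n|})\psi_n,\psi_n\bigr\rangle=O(1)$, where $F_T:=\int_0^T a\circ g^s\,ds$. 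On the other hand, for $\kappa_0>\overline{\nu}$, the hyperbolic structure of $\cN$ forces every $\rho$ with $d(\rho,\cN)\geq\hbar^{\overline{\nu}}/2$ (outside the stable manifold of $\cN$, which must be handled separately by a transversality argument) to leave any fixed neighborhood of $\cN$ within time $\overline{\nu}|\log\hbar|$ and thereafter accumulate damping at a uniform linear rate, yielding $F_{\kappa_0|\log\hbar|}(\rho)\gtrsim(\kappa_0-\overline{\nu})|\log\hbar|$. Combined with \eqref{e:app-mainbd} via approximate positivity of the quantization, this forces $\langle\Op_{\hbar_n}(F_{\kappa_0|\log\hbar_n|})\psi_n,\psi_n\rangle\gtrsim|\log\hbar_n|$, contradicting the $O(1)$ bound above.

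\textbf{Main obstacle.} The technical core is the uniform linear lower bound on $F_{\kappa_0|\log\hbar|}$ outside the $\hbar^{\overline{\nu}}/2$-tube of $\cN$. The delicate case is that of $\rho$ lying near the stable manifold $W^s(\cN)$, where hyperbolic expansion does not force immediate escape; controlling it requires combining the transversal contraction rate along $W^s(\cN)$ with the openness of $\{a>0\}$ along generic trajectories. Alternatively, one may bypass the pointwise estimate altogether by invoking the hyperbolic dispersive estimate \eqref{e:HDE0} together with the subadditive argument of \S\ref{pa:submult} directly on cylinders in the exterior of $\cN$, in the spirit of the approaches of Nonnenmacher-Zworski and Schenck to comparable resolvent estimates.
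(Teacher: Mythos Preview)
Your strategy diverges from the paper's, and the divergence occurs precisely at what you flag as the ``main obstacle'': the pointwise lower bound on $F_T=\int_0^T a\circ g^s\,ds$ outside the $\hbar^{\overline{\nu}}$-tube. This step does not go through as stated. A point $\rho$ with $d(\rho,\cN)\geq\hbar^{\overline{\nu}}/2$ can lie extremely close to the stable manifold $W^s(\cN)$ (or even on it), in which case $g^t\rho$ converges exponentially to $\cN$ and $F_\infty(\rho)<\infty$; no choice of $\kappa_0$ will produce $F_{\kappa_0|\log\hbar|}(\rho)\gtrsim|\log\hbar|$ there. The ``transversality argument'' you invoke cannot rescue this, because the bound from Theorem~\ref{t:maintheo1} only localizes the escaping mass outside the $\hbar^{\overline{\nu}}$-tube of $\cN$, with no control on its distance to $W^s(\cN)$: nothing prevents that mass from sitting in an $\hbar^{100}$-neighborhood of $W^s(\cN)$ while being at distance $\hbar^{\overline{\nu}}$ from $\cN$. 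There is a secondary issue in your step~5: integrating $\langle a\,\cU_\hbar^s\psi,\cU_\hbar^s\psi\rangle$ over $s$ and applying the nonselfadjoint Egorov of \S\ref{pa:egorov} yields $\langle\Op_\hbar(\tfrac12(1-e^{-2F_T}))\psi,\psi\rangle$, not $\langle\Op_\hbar(F_T)\psi,\psi\rangle$; this is repairable (work with $e^{-2F_T}$ instead of $F_T$), but the gap in step~6 is not.

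The paper bypasses this obstacle entirely by never passing through the shrinking tube. Because $a\geq 0$, the bound $-\int_0^{nn_0}a\circ g^s\leq 0=\beta nn_0$ holds \emph{everywhere}, so the subadditive machinery of \S\ref{pa:submult} can be run with the full family $W^n$ (defined via the \emph{fixed} partition $(P_\alpha)$) in place of $\Lambda_n$; there is no need to invoke Theorem~\ref{t:maintheo1} or the cutoff $\Theta_{\cN,\hbar,\overline{\nu}}$ at all. One obtains $(c^kk)^{-1}(1+o(1))\leq\langle\Op_\hbar(\sum_{\gamma\in(W^n)^c}\tP_\gamma)\psi,\psi\rangle$. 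Now $(W^n)^c$ consists of cylinders containing at least one letter $\infty$; decomposing according to the position of the last $\infty$ and using $\sum_{W^{n-p}}\tP_\gamma\leq 1$, $\sum_{\overline{W}^{p-1}}\tP_{\overline{\gamma}}=1$ bounds the right-hand side by $\sum_{p=1}^n\langle\Op_\hbar(P_\infty\circ g^{-pn_0})\psi,\psi\rangle$, which (via Egorov, the quasimode property, and $a\geq 0$) is $\leq n(1+o(1))\langle\Op_\hbar(P_\infty)\psi,\psi\rangle$. Hence $\langle\Op_\hbar(P_\infty)\psi,\psi\rangle\gtrsim 1/|\log\hbar|$. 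This contradicts the bound $\langle\Op_\hbar(P_\infty)\psi,\psi\rangle=o(|\log\hbar|^{-1})$, which follows (as in your step~4) from the energy identity together with the fact that the time-averaged damping is bounded below on $\supp P_\infty$ (a \emph{fixed} set at positive distance from $\cN^\delta$). The whole argument stays at the fixed scale set by the cover $(V_\alpha)$, so the stable-manifold issue never arises. Your closing ``alternative'' suggestion is in fact the right route, and is essentially what the paper does.
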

This inverse logarithmic
spectral gap was recently obtained in~\cite[Thm. 5.5]{CSVW12} using a
different approach, and under the slightly stronger assumption that
$\pi(\cN)\cap\text{supp}(a)=\emptyset$ where $\pi:S^*M\rightarrow M$
is the canonical projection on $M$ (in our setting, $\cN$ is allowed to
intersect $\supp a\cap a^{-1}(0)$). However, the resolvent
estimate obtaind in ~\cite[Thm. 5.5]{CSVW12} is of order
$\frac{\log(\Re\tau)}{\Re\tau}$, which is sharper (by a
logarithmic factor) than the one we obtain above. We believe that this
loss of a logarithmic factor is due to our method of proof, and that
the upper bound $\frac{\log(\Re\tau)}{\Re\tau}$ should hold under our
conditions as well.

A similar result had been proved by Christianson in~\cite{Chr07}, under the assumption
that $\cN$ consists in a single hyperbolic closed geodesic, and extended
in~\cite{Chr11} to the case of a (single) semihyperbolic
closed geodesic\footnote{A semihyperbolic closed geodesic admits at
  least one positive
  Lyapunov exponent.} satisfying a nonresonance assumption.
In~\cite{Riv11a} the same spectral gap
was proved under the assumption that the geodesic flow on $M$ is
Anosov~\cite{KaHa}. The above Theorem thus
generalizes the results of~\cite{Chr07,Riv11a}, and it cannot be
improved without additional assumptions --- see the example
announced in~\cite{BuChr09}. 

In order to get a larger gap, one can try to make {\em global}
assumptions on the geodesic flow on $M$, for instance assume it is of
Anosov type.
It was conjectured in~\cite{Non11} that if the geodesic flow  is Anosov and $\cN$
satisfies the condition~\eqref{e:negpress}, then there should be a \emph{finite} spectral
gap, namely all eigenvalues $\tau\neq 0$ of the
problem~\eqref{e:specprob} should satisfy $\Im\ \tau\leq -\gamma$ for
some $\gamma>0$. 
We refer the reader to~\cite{Sch11,Non11} for partials results in favor of this conjecture. 

The references~\cite{Leb93,Chr09} show how to connect resolvent estimates with the
decay of the {\em energy}
$$
 E(v(t)) \defeq \frac12 \big(\|\nabla v(t)\|^2 + \|\partial_t v(t)\|^2\big)
$$
of a wave $v(x,t)$ satisfying \eqref{e:DWE}. With our dynamical conditions one
obtains a stretched exponential decay (see \cite[Cor. 5.2]{CSVW12}):
\begin{coro}\label{cor:energy-decay}
Assume the same geometric conditions as in Thm~\ref{th:log-gap}. For
any $s>0$ there exists $C_s>0$, such that for
any initial data $(v(0),\partial_t v(0))\in H^{s+1}(M)\times H^s(M)$,
the energy of the wave $v(t)$ solving \eqref{e:DWE} with those data
satisfies
$$
\forall t\geq 0,\qquad E(v(t))\leq C_s\,e^{-t^{1/2}/C_s}\,\big(\|v(0)\|^2_{H^{s+1}} + \|\partial_tv(0)\|^2_{H^{s}} \big)\,.
$$
\end{coro}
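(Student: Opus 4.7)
The plan is to apply a standard contour-integral argument to the damped wave semigroup, using as input the resolvent bound \eqref{e:resolv-est} and the spectral gap \eqref{e:gap} from Theorem~\ref{th:log-gap}. This mirrors the mechanism used in the logarithmic-decay context by Lebeau~\cite{Leb93} and in the faster-decay contexts by Christianson~\cite{Chr09}; in our situation the combination of the logarithmic width of the resolvent-free region and the logarithmic size of the resolvent inside it will deliver stretched exponential decay, once one pays a mild smoothness toll on the data.

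First, recast \eqref{e:DWE} as the first-order Cauchy problem $\partial_t U = \mathcal{A} U$, with $U = (v, \partial_t v)$ and $\mathcal{A} = \left(\begin{smallmatrix} 0 & I \\ \Delta & -2a\end{smallmatrix}\right)$, acting on the energy space $\mathcal{H} = H^1(M)\times L^2(M)$, so that $E(v(t)) = \|U(t)\|_{\mathcal{H}}^2$. The Schur complement of $(\mathcal{A}-\lambda)$ is, up to a sign, the operator $P(\tau) = -\Delta - 2ia\tau - \tau^2$ of \eqref{e:specprob} with $\tau = i\lambda$, so that the resolvent $R(\lambda,\mathcal{A})$ on $\mathcal{H}$ is expressed in closed form in terms of $P(\tau)^{-1}$. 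Combining this with \eqref{e:resolv-est} (and its mirror on the opposite half-plane obtained by the $\tau\leftrightarrow -\overline{\tau}$ symmetry), one sees that $(\mathcal{A}-\lambda)^{-1}$ exists on $\mathcal{H}$ throughout the logarithmic region $\{|\Re\lambda| \leq \tilde{C}^{-1}/\log(1+|\Im\lambda|)\}$ for $|\Im\lambda|$ large, with a polynomial-with-log norm bound obtained directly from the Schur formula.

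Next, represent the semigroup through the Dunford integral
\[
e^{t\mathcal{A}}\,U_0 \ =\ \frac{1}{2\pi i}\int_{\Gamma}e^{\lambda t}\,(\mathcal{A}-\lambda)^{-1}\,U_0\, d\lambda,
\]
and deform the initial vertical contour $\{\Re\lambda=\varepsilon\}$ into the curve $\Gamma_c = \{\Re\lambda = -\tilde{C}^{-1}/\log(1+|\Im\lambda|)\}$, which sits inside the resolvent-free strip produced by Theorem~\ref{th:log-gap}. The deformation is justified by the spectral gap \eqref{e:gap}, by the resolvent bound inside the strip, and by the smoothness of $U_0$ which guarantees enough decay at $|\Im\lambda|\to\infty$. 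On $\Gamma_c$, the phase factor satisfies $|e^{\lambda t}| = e^{-\tilde{C}^{-1} t/\log(1+|\Im\lambda|)}$. Finally, split the contour at a frequency $R=R(t)$: for $|\Im\lambda|\leq R$ bound the integrand by the resolvent estimate times $e^{-ct/\log R}$, while for $|\Im\lambda|\geq R$ use the extra regularity of $U_0\in H^{s+1}\times H^s$ to extract a Sobolev decay factor in $\Im\lambda$ ensuring convergence of the tail. Optimizing $R$ by balancing the two contributions (setting $e^{-ct/\log R}\sim R^{-s}$ yields $\log R \sim \sqrt{ct/s}$) produces the stretched exponential rate $e^{-t^{1/2}/C_s}$ with the $s$-dependent constant advertised in the corollary.

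The main obstacle is the careful conversion of the operator-theoretic resolvent estimate on $P(\tau)^{-1}\colon L^2\to L^2$ into a usable bound for $(\mathcal{A}-\lambda)^{-1}$ on the energy space, compatible with the Sobolev scale, so that after the contour deformation the logarithmic gain $e^{-ct/\log|\Im\lambda|}$ together with the smoothness-driven tail gain does produce the $t^{1/2}$ exponent (and not a weaker one) through the above optimization. A secondary technical point is the handling of the zero mode (constants in $v$, which lie in $\ker\Delta$ and do not contribute to the energy) and of a neighborhood of $\lambda=0$ in the contour, both of which are standard but must be treated explicitly to justify the deformation.
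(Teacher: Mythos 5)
Your proposal is correct and takes essentially the same route the paper (implicitly) takes: the paper does not prove Corollary~\ref{cor:energy-decay} itself but delegates entirely to the cited references (\cite[Cor.~5.2]{CSVW12}, with \cite{Leb93,Chr09} for the general mechanism connecting resolvent bounds to energy decay), which carry out precisely the contour-deformation argument you describe, with the frequency cutoff $R(t)$, the Sobolev-driven tail, and the balancing $\log R\sim\sqrt{t}$ giving the $e^{-t^{1/2}/C_s}$ rate. The technical points you flag --- lifting the $L^2$ bound on $P(\tau)^{-1}$ to an energy-space bound for $(\mathcal{A}-\lambda)^{-1}$, the handling of the zero mode near $\lambda=0$, and the justification of the deformation --- are exactly the ones those references treat.
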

\begin{rema}
The undamped set $\cN$ can be ``lifted'' to nearby energy
shells, and we will often consider $\cN^\delta$ defined as in \eqref{e:Lambda^delta}. Due to the homogeneity of the
geodesic flow, the condition \eqref{e:negpress}
is satisfied on all nonzero energy shells when it is on $S^*M=p_0^{-1}(1/2)$.
\end{rema}

We now give the proof of Theorem~\ref{th:log-gap}.

\begin{proof}

Using the semiclassical notations of the introduction, we need to
establish the existence of constants $\delta_0>0$, and $C>0$ such
that, for $\hbar>0$ small enough,
\begin{equation}\label{e:resolvent}
\forall z\in \left[\frac{1}{2}-\delta_0,\frac{1}{2}+\delta_0\right]+\imath\left[-C^{-1}\frac{\hbar}{|\log\hbar|},
C^{-1}\frac{\hbar}{|\log\hbar|}\right],\ \left\|(\mathcal{P}(\hbar,z)-z)^{-1}\right\|_{\mathcal{L}(L^2(M))}\leq\frac{C|\log\hbar|^2}{\hbar}.
\end{equation}
Translating back to the original setting of \eqref{e:specprob}, this
resolvent estimate implies~\eqref{e:resolv-est}.

In order to prove~\eqref{e:resolvent}, we proceed
by contradiction. Namely, we assume that there exist a sequence of
parameters $(\hbar_l\searrow 0)_{l\in\IN}$, of spectral parameters $z(\hbar_l)\in\mathbb{C}$ and
of normalized {\em quasimodes} $\psi_{\hbar_l}\in L^2(M)$, so that, when $l$ tends to infinity,
\begin{equation}\label{e:contradiction}
\begin{split}
\mathcal{P}(z(\hbar_l),\hbar_l)\psi_{\hbar_l} & =z(\hbar_l)\psi_{\hbar_l}+o(\hbar_l|\log\hbar_l|^{-2}),\quad\\
z(\hbar_l)=\frac{1}{2}+o(1),&\quad \frac{\text{Im}\
  z(\hbar_l)}{\hbar_l}=o(|\log\hbar_l|^{-1})\,.
\end{split}
\end{equation}
To alleviate the notations we will omit the parameter $l$ and just use
$\hbar$, $z$, $\psi_\hbar$. A notable difference with the proof of
Theorem~\ref{t:maintheo1} is that we need to deal with quasimodes,
instead of eigenmodes (considering only eigenmodes would allow to prove the
inverse logarithmic gap~\eqref{e:gap}, but not the resolvent estimate~\eqref{e:resolv-est}). 

The assumptions~\eqref{e:contradiction} imply the following
estimates, that we will frequently use in our proof. For any $\mathcal{K}>0$, the following estimates hold uniformly
for times $|t|\leq \mathcal{K}|\log\hbar|$
\begin{align}\label{e:quasimodes}
\cU_\hbar^t e^{itz/\hbar}\psi_\hbar \defeq e^{-\frac{\imath  t}{\hbar}(\mathcal{P}(\hbar,z)-z)}\psi_{\hbar}&=\psi_{\hbar}+o(|t||\log\hbar|^{-2}),\quad
\text{and}\\
\label{e:small-Imz}
\quad e^{\frac{ t \Im\ z}{\hbar}}& =1+o(|t||\log\hbar|^{-1}).
\end{align}
Hence, even for $|t|\asymp|\log\hbar|$ both remainders
are $o_{\hbar\to 0}(1)$.

Applying the quasimode equation and~\eqref{e:quasimodes}, we obtain, for every fixed $t>0$,  
\begin{align*}
-\hbar^{-1}\,\Im z& =-\hbar^{-1}\,\Im\la\psi_{\hbar},\cP(\hbar,z)\psi_\hbar\ra+o(|\log\hbar|^{-1})\\
 & =\la\psi_{\hbar},a\,\psi_\hbar\ra+\cO(\hbar)+o(|\log\hbar|^{-1})\\
 & =e^{-2t\frac{\Im z}{\hbar}}\,
 \la\psi_{\hbar},(\U_\hbar^t)^*\,a\,\U_\hbar^t\psi_\hbar\ra + o_t(|\log\hbar|^{-1})\,.
\end{align*}
Applying the Egorov estimate \eqref{e:Egorov1}, in particular the case
described in remark~\ref{r:U*bU}, and
averaging over $t\in [-T,T]$, we get
$$
-\hbar^{-1}\,\Im z
=\big\la\psi_{\hbar},\Op_{\hbar}\Big(\frac{1}{2T}\int_{-T}^Ta\circ
g^te^{-2t\frac{\Im z}{\hbar}-2\int_0^ta\circ
  g^sds}dt\Big)\,\psi_\hbar\big\ra + o_T(|\log\hbar|^{-1}).
$$
Using the fact that the quantization procedure is almost positive --
see $\S$\ref{p:antiwick} -- and the identity \eqref{e:small-Imz}, one gets
the bound
$$
-\hbar^{-1}\,\Im z \geq
(1+o_T(1))e^{-2T\|a\|_{\infty}}\big\la\psi_{\hbar},\Op_{\hbar}\Big(\frac{1}{2T}\int_{-T}^Ta\circ
g^tdt\Big)\,\psi_\hbar\big\ra + o_T(|\log\hbar|^{-1}).
$$
We now use the cutoff function $P_{\infty}\in C^\infty_c(V_\infty,[0,1])$ introduced in
$\S$\ref{s:discretization}: notice that its
support is at positive distance
from $\cN^\delta$. 
Using again that $\Op_{\hbar}$ is approximately positive, one finds that
$$
-\hbar^{-1}\,\Im z \geq
(1+o_T(1))e^{-2T\|a\|_{\infty}}\big\la\psi_{\hbar},\Op_{\hbar}\Big(P_{\infty}\times\frac{1}{2T}\int_{-T}^Ta\circ
g^tdt\Big)\,\psi_\hbar\big\ra + o_T(|\log\hbar|^{-1}).$$
Since $P_{\infty}$ is supported away from the undamped set
$\cN^\delta$, there exists $T>0$ and $a_0>0$ (independent of $\hbar$) such that
$$\inf_{\rho\in\text{supp}P_\infty}\frac{1}{2T}\int_{-T}^Ta\circ g^t(\rho)dt\geq a_0,$$
which implies
$$
-\hbar^{-1}\,\Im z \geq (1+o_T(1))a_0
\,e^{-2T\|a\|_{\infty}}\left\la\psi_{\hbar},\Op_{\hbar}(P_{\infty})\,\psi_\hbar\right\ra
 + o_T(|\log\hbar|^{-1}).
$$
In particular, from our assumption on $\Im z(\hbar)$ we get
\begin{equation}\label{e:concl}
\la\psi_{\hbar},\Op_{\hbar}(P_{\infty})\,\psi_\hbar\ra = o(|\log\hbar|^{-1})\,.
\end{equation}
To obtain a contradiction we will prove an inverse logarithmic
lower bound for the above left hand-side. This can be achieved
by adapting the argument of Theorem~\ref{t:maintheo1}.

We will use the notations introduced in $\S$\ref{s:discretization}. 
Instead of considering the subset of cylinders $\Lambda_n\subset W^n$ in the argument of
$\S$\ref{ss:proof}, 
we will use the full family $W^n$, and obtain an upper bound for
$$
\big\|\sum_{\gamma\in
    (W^n)^c}\tPi_{\gamma}\psi_{\hbar}\big\|,
$$
where $(W^n)^c$ is the complementary of $W^n$ in $\overline{W}^n$. 
Recall that $n=[\kappa_0|\log\hbar|]$ is a short logarithmic time, for
which we may apply Egorov's Theorem and the pseudodifferential calculus. 

\begin{rema} In \S\ref{s:proof} the restriction to cylinders in $\Lambda_n$ had allowed to show that the
Birkhoff averages $-\int_0^{nn_0}a\circ g^s(\rho)ds$ were bounded above by $\beta
nn_0+\mathcal{O}(1)$, a property which was crucially used in
\S\ref{pa:submult}. 
We are now interested in the case $\beta=0$, and 
the upper bound $-\int_0^{nn_0}a\circ g^s(\rho)ds\leq 0$ obviously holds for
every point $\rho\in T^*M$ since $a$ is nonnegative. 
\end{rema}

Using the hyperbolic
dispersive estimate \eqref{e:HDE1} and taking the sum over $W^n$, we
can prove the inequality
\eqref{e:proofstep2} for our quasimode
$\psi_\hbar$. Using~\eqref{e:small-Imz} and the fact 
that the time
$knn_0=\cO(|\log\hbar|)$, we get
$$
\Big\|\sum_{\Gamma\in(W^{nk})^c}\Pi_{\Gamma}\,\psi_{\hbar}\Big\|\geq
 1+o(1)\,,
$$
Implementing the same subadditivity
argument as in \S\ref{pa:submult}, we find
$$ 1+o(1)\leq 
\Big\|\sum_{\Gamma\in(W^{nk})^c}\Pi_{\Gamma}\,\psi_{\hbar}\Big\|\leq
c^k\,  \,(1+o(1))\sum_{j=0}^{k-1}\,\Big\|\sum_{\gamma\in
  (W^n)^c}\Pi_{\gamma}\U_{\hbar}^{jn}\,\psi_{\hbar}\Big\|.$$
Thanks to the upper bound~\eqref{e:calderonbis} and the subunitarity
bound $\|\U_{\hbar}^{n}\|\leq 1$, we verify that 
$\Big\|\sum_{\gamma\in (W^n)^c}\Pi_{\gamma}\Big\|=\cO(1)$. We now use
the identities~(\ref{e:quasimodes},\ref{e:small-Imz}) one more time and we obtain
$$ 1+o(1)\leq
c^k\, k \,(1+o(1))\,\Big\|\sum_{\gamma\in
  (W^n)^c}\Pi_{\gamma}\,\psi_{\hbar}\Big\|+o(1).$$
Like in \S\ref{pa:semiclapprox} and using again~(\ref{e:quasimodes},\ref{e:small-Imz}), this inequality can be rewritten as
$$
(c^k\, k)^{-1} \big(1+o(1)\big)\leq  \Big\|\sum_{\gamma\in (W^n)^c}\tPi_{\gamma}\,\psi_{\hbar}\Big\|\,,
$$
and then analyzed through the pseudodifferential calculus like in the proof of Theorem~\ref{t:maintheo1}. We obtain\footnote{Like in paragraph~\ref{pa:semiclapprox}, the parameter $\nu_0>0$ will change from line to line, meaning that we keep the worst remainder term.} 
\begin{equation}\label{e:lower3}
(c^kk)^{-2}(1+o(1)) \leq  \big\langle\Op_{\hbar}\Big(\sum_{\gamma\in (W^n)^c}\tP_{\gamma}\,\Big)\psi_{\hbar},\psi_{\hbar}\big\rangle+\mathcal{O}(\hbar^{\nu_0}).
\end{equation}
The set $(W^n)^c$ consists in the cylinders in $\overline{W}^n$ with at
least one index $\gamma_j=\infty$, so it can be split into
$$
(W^n)^c = \bigsqcup_{p=1}^n \{\Gamma=\overline{\gamma}\,\infty\,\gamma\, :\, \overline{\gamma}\in
  \overline{W}^{p-1},\,\gamma\in W^{n-p}\}\,.
$$
Accordingly,
$$\sum_{\gamma\in (W^n)^c}\tP_{\gamma} 
=\sum_{p=1}^n \Big(\big(\sum_{\gamma\in W^{n-p}}\tP_{\gamma}\big)
P_{\infty}\circ g^{-(n-p+1)n_0}\, \big(\sum_{\overline{\gamma}\in
  \overline{W}^{p-1}} \tP_{\overline{\gamma}}\circ g^{-(n-p+1)n_0}\big)\Big).
$$
Since
the family $(P_{\alpha})_{\alpha\in\overline{W}}$ forms a resolution
of identity near $\cE^{\delta/2}$, we have for any $t\in\IR$
$$\sum_{{\gamma}\in W^{n-p}} \tP_\gamma\circ g^t \leq 1,\quad
\sum_{\overline{\gamma}\in \overline{W}^{p-1}} \tP_{\overline{\gamma}}\circ g^t = 1,\quad 
 \text{near }\cE^{\delta/2}\,.
$$
The approximate positivity of $\Oph$ implies
$$
\big\la\Oph\big(\sum_{\gamma\in (W^n)^c}\tP_{\gamma}\big)
\psi_\hbar,\psi_\hbar\big\ra \leq 
\sum_{p=1}^n  \big\la \Oph(P_{\infty}\circ g^{(p-n-1)n_0})\psi_\hbar,\psi_\hbar\big\ra + \cO(\hbar^{\nu_0}),
$$
so from \eqref{e:lower3} we get
$$(c^kk)^{-2}(1+o(1)) \leq
\sum_{p=1}^{n}\left\langle\Op_{\hbar}\left(P_{\infty}\circ
    g^{-pn_0}\right)\psi_{\hbar},\psi_{\hbar}\right\rangle+\mathcal{O}(\hbar^{\nu_0}).
$$ 
We now again combine the fact that $\psi_{\hbar}$ is an quasimode (via equation~\eqref{e:quasimodes}) with
the Egorov theorem, and obtain
$$
(c^kk)^{-2}(1+o(1)) \leq
\sum_{p=1}^{n}\big\langle\Op_{\hbar}\big(P_{\infty}\,e^{-2pn_0\frac{\Im z}{\hbar}-2\int_0^{pn_0}a\circ
      g^sds}\big)\psi_{\hbar},\psi_{\hbar}\big\rangle+\mathcal{O}(\hbar^{\nu_0})+o(n^2|\log\hbar|^{-2}).
$$
A last application of the fact that $a\geq 0$, $\Im
z=o(\hbar|\log\hbar|^{-1})$, $n=\cO(|\log\hbar|)$ and that $\Op_{\hbar}$ is almost positive
implies that
$$
(c^kk)^{-2}(1+o(1)) \leq 
n\, (1+o(1))\, 
\la\Op_{\hbar} (P_{\infty}) \psi_{\hbar},\psi_{\hbar}\ra+\mathcal{O}(\hbar^{\nu_0})+o(1).
$$
Hence, for $n=[\kappa_0|\log\hbar|]$ we end up with
$$
\frac{(c^kk)^{-2}}{\kappa_0|\log\hbar|}(1+o(1)) \leq  \la\Oph(P_{\infty})\psi_{\hbar},\psi_{\hbar}\ra.$$
This lower bound establishes the contradiction with
Eq.~\eqref{e:concl}, and shows that our
assumption~\eqref{e:contradiction} cannot be verified. This proves the
resolvent estimate~\eqref{e:resolvent}, and our theorem.
\end{proof}

\begin{rema}
Provided that we consider a sequence of $o(\hbar|\log\hbar|^{-2})$ quasimodes, the above logarithmic lower bound on
$\la\psi_{\hbar},\Op_{\hbar}\left(P_{\infty}\right)\,\psi_\hbar\ra$
holds as well in the selfadjoint 
case for a smooth cutoff function $1-P_{\infty}$ around an hyperbolic subset $\Lambda$ satisfying $P_{top}(\Lambda,g^t,\log J^u/2)<0$. In fact, its proof only used the fact that $\Im
z=o(\hbar|\log\hbar|^{-1})$ and $a\geq 0$. In this case, this lower
bound generalizes the concentration results obtained in~\cite{CdVPa94,
  ToZe03, BuZw04, Chr07} for hyperbolic closed geodesics (yet, the required precision of our quasimode is
stronger than the one used in~\cite{Chr07}; besides, our result does
not encompass the case of a semihyperbolic orbit treated
in~\cite{Chr11}).
\end{rema}

\end{document}